\documentclass[a4paper,12pt]{article}

\usepackage[english]{babel}
\usepackage{amsmath}
\usepackage{amsthm}
\usepackage{enumerate}
\usepackage{graphicx}
\usepackage{mathrsfs}
\usepackage{geometry}
\usepackage{microtype}

\newtheorem{thm}{Theorem}[section]
\newtheorem{remark}[thm]{Remark}
\newtheorem{defn}[thm]{Definition}
\newtheorem{lemma}[thm]{Lemma}

\numberwithin{equation}{section}

\pagestyle{plain}

%opening
\title{Time relaxation of a phase-field model with entropy balance}
\author{Manuela Girotti \footnote{Department of Mathematics and Statistics, Concordia University,  1455 de Maisonneuve West, H3G 1M8, Montr\'eal, Qu\'ebec, Canada; e-mail: mgirotti@mathstat.concordia.ca}}
\date{}

\begin{document}
\maketitle

\begin{abstract}
We deal with a system of two coupled differential equations, describing the evolution of a first order phase transition. In particular, we have two non-linear parabolic equations: the first one is deduced from a balance law for entropy and it describes the evolution of the absolute temperature; the other one is an equilibrium equation for microforces and it regulates the behaviour of a scalar phase parameter. Moreover, the second equation shows a time-relaxation coefficient related to the time-derivative of the phase parameter. 

We prove well-posedness of solution to the given system, using a standard method of approximating problems; afterwards, we study the behaviour of the system as the time-relaxation coefficient tends to zero: as a result, we find out that the original problem converges to a new problem, with a stationary phase equation. 
\end{abstract}

\section{Introduction}
This paper deals with a system of two coupled differential equations, describing the evolution of a first order phase transition. Both equations are non-linear and parabolic: the first one is deduced from a balance law for entropy and it describes the evolution of the absolute temperature $\vartheta$; the other one is an equilibrium equation for microforces, which are responsible for the phase transition process, and it regulates the behavior of a scalar phase parameter $\chi$. 

The system is regarded in the parabolic cylinder $Q := \Omega \times (0,T)$, with $T>0$ and $\Omega \subseteq \textbf{R}^3$, smooth and bounded, and it has following expression:
\begin{equation}
\begin{split}
&\partial_t (\log \vartheta + \chi) - \Delta \vartheta = g  \\
&\mu \chi_t - \Delta \chi + \xi + \sigma'(\chi) =  \vartheta  \ \ \ \text{and} \ \ \ \xi \in \partial \beta (\chi).
\end{split} \label{intro1}
\end{equation}
where $g$ is a thermal source, $\partial \beta$ is the subdifferential of an appropriate proper, convex and lower semi-continuous function $\beta$ ($\partial \beta$ is indeed a maximal monotone graph) and $\sigma$ is a $C^1$ function with Lipschitz continuous derivative. 

The sum $\partial \beta + \sigma'$ introduces a non-linear term related to some physical constraints (thermodynamically consistent) which can eventually occur in applications. 

Finally, the coefficient $\mu$ represents a time-relaxation parameter and it is usually a small quantity in applications, compared with other physical quantities which act during phase-transitions. 

The PDE system is completed with initial and boundary data: a non-homogeneous Robin condition is provided for the absolute temperature $\vartheta$  and a standard Neumann condition is set for the order parameter $\chi$.
\begin{equation}
\begin{array}{ll}
\partial _\nu \vartheta + \alpha \vartheta = h &\text{on} \ \partial \Omega \times (0, T)\\
\partial_{\nu} \chi = 0 &\text{on} \ \partial \Omega \times (0, T) \\
\log \vartheta (0) = \log \vartheta_0  &\text{and} \ \ \ \chi (0) = \chi_0 \ \ \ \text{in} \ \Omega,
\end{array} 
\label{intro2}
\end{equation}
where $\alpha$ is an arbitrary positive function, with appropriate regularity, and $h$ denotes the boundary thermal supply; $\partial _\nu$ indicates the outward normal derivative. 
The initial data are given by two function $\vartheta_0$ and $\chi_0$ defined on $\Omega$ and in particular we set $\vartheta_0 >0$ on $\Omega$.

The main difficulties in dealing with this system are the non-linear terms, in particular the logarithmic term and the maximal monotone graph $\partial \beta$. However, the presence of the logarithm enable us to conclude straightforward the positiveness of the temperature $\vartheta$, once the existence of solution has been proved in some suitable functional space. 

To obtain the result of existence of solutions, we have to go through a double approximation of the problem: first we regularize the non-linear terms and the initial data and subsequently we proceed with a Faedo-Galerkin procedure to solve the approximated system. Finally we attain to the existence by using appropriate compactness results. 
 
After that, we are able to prove the uniqueness of solution to our problem. 

The second target of this paper is the study of the behavior of the system as the coefficient $\mu$ tends to $0$. In particular, we will find out that the original problem $(P_\mu)$, with $\mu >0$, converges to a problem $(P_0)$, where the time derivative of $\chi$ does not appear in the phase equation. 

The study of this asymptotic behavior has remarkable interest in many physical situations, where usually the $\mu$ parameter is smaller with respect to other physical quantities (such as the interfacial energy coefficient, which is related to the laplacian term in the phase equation). 

Since we have a stationary equation for the phase parameter, as $\mu \searrow 0$, we loose time regularity properties for $\chi$ and consequently for the logarithmic term, too. Thus, we need to impose further hypotheses on the non-linear terms $\beta$ and $\sigma$ and to introduce a generalization of the function $\log \vartheta$ in the space $(H^1(\Omega))'$, in order to prove the existence of solution to the limit problem $(P_0)$.

This system was first introduced and studied by Bonetti \textit{et al.} in \cite{model} and \cite{global}. In those articles the mathematical model of the system above (with extra terms involving a thermal memory kernel) is explicitly derived and existence and uniqueness of solution is proved with $\mu >0$ fixed and with Dirichlet boundary condition for the variable $\vartheta$. 
Moreover, the latent heat is thought as an arbitrary function of $\chi$,not constant, as we assume in this paper. 

On the other hand, the use of Robin boundary condition in this paper introduce  new difficulties in our system in proving the existence of solutions, since we have to deal with boundary terms which have to be studied using appropriate trace theorems. 

A similar model, which shows an entropy balance equation for the evolution of the temperature $\vartheta$, instead of the usual energy balance equation, has been studied by Bonetti, Colli and Fr\'emond in \cite{model2} and by Bonetti and Fr\'emond in \cite{model3}. In particular, both articles show explicit forms of non-linearities, while in the present article the non-linear term is represented by generic functions $\beta$ and $\sigma$, and the logarithmic term appears also under the laplacian; moreover, the article \cite{model2} shows a thermal memory kernel.

Regarding the study of asymptotic behavior of the solutions as a certain coefficient tends to zero, we can quote the article of Gilardi and Rocca \cite{convergence}, where, starting from the same model proposed in \cite{global} and \cite{model}, the convergence to $0$ of the energy interface coefficient in front of $\Delta \chi$ in the phase parameter equation is analyzed. 

We quote also the article of Colli \textit{et al.} \cite{hiroshima}, where the asymptotic behavior as the time relaxation parameter $\mu \searrow 0$ is analyzed, but we point out that in this case a Caginalp phase-field model with memory is taken into exam; moreover, the equation regarding the behavior of the temperature $\vartheta$ is derived from an energy balance (and not from an entropy balance as in this article) and does not show a logarithmic term.

The paper is organized as follows: in Section \ref{model} we provide a description of the mathematical model which is later studied in Section \ref{exist}, where the well-posedness of the problem is proved. 
Finally, in the last Section \ref{mu} we study the asymptotic behavior of the problem as the time relaxation parameter $\mu$ tends to $0$. 
All the results of the present paper are stated in Section \ref{result}.

\section{The model}
\label{model}

Consider a first-order phase transition occurring in a smooth domain of the physical space $ \Omega \subseteq \textbf{R}^3$. The unknown variables of our problem are the absolute temperature $\vartheta \in (0, + \infty)$ and the order parameter $\chi$. In particular, $\chi$ represents a local concentration or the rescaled proportion of one phase with respect to the other and it is related to the microscopical movements of particles. 

In order to grant thermomechanical consistency to the model, we need to introduce some constraints on $\chi$, which will be derived from suitable functions $\beta$ and $\sigma$, as we will see later. 

The derivation of the model is mainly taken from the article by Bonetti \textit{et al.} (\cite{model}), which is based on an approach independently proposed by Gurtin (\cite{gurtin}) and Fr\'emond (\cite{fremond}). We repeat it here for the reader's convenience.

Assuming that macroscopic deformations do not occur during the process and that microscopic accelerations are negligible, we start stating the following balance law for microforces (compare with \cite{fremond}):
\begin{equation}
\begin{array}{ll}
\text{div} \textbf{H} + b = \textbf{B} &\text{in} \ \Omega\\
\textbf{H} \cdot \nu = 0 &\text{on} \ \partial \Omega, \label{Hboundary}
\end{array}
\end{equation}
where $\textbf{H}$ and $\textbf{B}$ are interior microscopic forces and $b$ is an external force acting on the body at microscopic level; in particular, we are assuming that no external contact force acts on the boundary of our domain $\Omega$: this is the reason why we imposed a no-flux boundary condition for $\textbf{H}$. 
We can think of all these microforces as mechanically induced heat sources, which have to be taken into account while dealing with the first law of thermodynamics (in a local form):
\begin{equation}
e_t = - \text{div} \textbf{q} + r + \text{div}(\textbf{H} \chi_t) + b \chi_t  = - \text{div} \textbf{q} + r + \textbf{H} \cdot \nabla \chi_t + B \chi_t,
\label{1termodin}
\end{equation}
with $e$ the internal energy of the system, $\textbf{q}$ the heat flux and $r$ an additional heat source. 

At this point, we aim to study the thermomechanical consistency of the model and to find explicit expressions for the microscopic forces $\textbf{B}$ and $\textbf{H}$. First of all, let recall the second law of thermodynamics in the form of the Clausius-Duhem inequality
\begin{equation}
\eta_t \geq - \text{div} \textbf{Q} + g,
\label{2termodin}
\end{equation}
with $\eta$ the entropy of the system, $\textbf{Q} := \textbf{q}/\vartheta$ the entropy flux and $g:= r/\vartheta$ the external entropy supply; we introduce also the free-energy functional $\Psi$ (see \cite{model2}, \cite{brokate}, \cite{caginalp} e \cite{schimperna})
\begin{equation}
\Psi (\vartheta, \chi) = \mathscr{F}(\vartheta, \chi) + \frac{\varepsilon}{2} \left| \nabla \chi \right|^2,
\end{equation}
with $\mathscr{F}$ the density of the free energy for pure phases; $|\nabla \chi|^2$ takes into account local interactions between phases, while the constant $\varepsilon >0$ is the energy interface coefficient (see \cite[Sec. 3 and 4]{caginalpfife} and \cite[Sec. 3]{fix} for a thorough discussion on the argument).

Using the Helmholtz's relation
\begin{equation}
\eta = - \frac{\partial \Psi}{\partial \vartheta},  \label{helmholtz}
\end{equation}
and assuming that the entropy has the following expression
\begin{equation}
\eta = c_s \left(1 + \log \vartheta \right) + \ell \chi, \label{entropy}
\end{equation}
where $c_s \in \textbf{R}_+$ is the specific heat and $\ell \in \textbf{R}_+$ is the latent heat associated with the phase transition, we have
\begin{equation}
\Psi (\vartheta, \chi) = - c_s \vartheta \log \vartheta - \ell \vartheta \chi + \left[ \beta + \sigma \right](\chi) + \frac{\varepsilon}{2} \left| \nabla \chi \right|^2, \label{pottermo}
\end{equation}
with $\beta: \textbf{R} \rightarrow [0,+ \infty]$ a proper, convex and lower semi-continuous function and $\sigma: \textbf{R} \rightarrow \textbf{R}$ another (suitably regular) function. 
The functions we have right now introduced represent arbitrary constraints on the order parameter $\chi$; by suitably choosing $\beta$ and $\sigma$, the model may describe different types of phase transitions. 

\paragraph{Examples of possible non-linearities.}{
The non-linear term $\beta + \sigma$ may have different significant expressions. 

If we are assuming that the two phases may coexist with different proportions (i.e. we are assuming the possibility of the so called ``mushy regions''), we can reasonably impose the following constraint
\begin{equation}
\chi \in [0,1]
\end{equation}
setting $1-\chi$ equal to the proportion of the other phase; in particular, the values $\chi =0,1$ correspond to the pure phases, while for the in-between values both phases coexist at each point of the body.

To force the order parameter to take only the required values, we can set $\beta = I_{[0,1]}$ the indicator function of the interval $[0,1]$, defined by $I_{[0,1]}(\chi) = 0$ if $\chi \in [0,1]$ and $I_{[0,1]}(\chi) = + \infty$ elsewhere. The subdifferential of $\beta$, which will appear in the phase equation, is then the following maximal monotone graph
\begin{equation}
\xi \in \partial I_{[0,1]}(\chi) \ \ \ \Longleftrightarrow \ \ \  \left\{ 
\begin{array}{ll}
\xi \leq 0 & \text{if} \ \chi=0 \\
\xi =0 & \text{if} \ \chi \in (0,1)\\
\xi \geq 0 & \text{if} \ \chi =1.
\end{array}
\right.
\end{equation}
In this case, we can choose the following expression for the function $\sigma$, which is quite common for solid-liquid phase transitions,
\begin{equation}
\sigma (\chi) = -\ell \vartheta_c + 4 M \chi(1 - \chi),
\end{equation}
where $\vartheta_c$ is the critical temperature of the transition and $M$ is the maximum value of $\sigma$, attained at $\chi = 1/2$ (see \cite[Sec. 2.4]{model}).

Another important model is the so called ``double-well" model, obtained by choosing the following form for the potential
\begin{equation}
W = \beta + \sigma = \frac{1}{4}\left( \chi^2 - 1 \right)^2,
\end{equation}
which has two minima at the points $\chi = \pm 1$ (pure phases) and a maximum at the point $\chi = 0$ (transition point).

In this case, the non linear term in the phase equation will be $\beta' (\chi) + \sigma'(\chi) = \chi^3 - \chi$.
}

\paragraph{}
Combining the first and second law of thermodynamics ((\ref{1termodin}) and (\ref{2termodin})), we get the inequality
\begin{align}
\Psi_t &= e_t - \vartheta_t \eta - \vartheta \eta_t \nonumber \\
&\leq \left[ - \text{div} \textbf{q} + r + \textbf{H} \cdot \nabla \chi_t + B \chi_t \right] - \vartheta_t \eta - \vartheta \left[ - \text{div} \left( \frac{\textbf{q}}{\vartheta} \right) + \frac{r}{\vartheta} \right]  \nonumber \\
&= - \vartheta_t \eta - \frac{1}{\vartheta} \textbf{q} \cdot \nabla \vartheta + B \chi_t + \textbf{H} \cdot \nabla \chi_t,
\label{3termodin}
\end{align}
which has to be identically fulfilled by any admissible process $\mathscr{P} = ( \vartheta_t , \chi_t , \nabla \chi_t , \nabla \vartheta)$. The first equality follows from the well known relation: $e = \Psi + \vartheta \eta$. 

Let consider the processes $\mathscr{P} = (0, \chi_t, \textbf{0}, \textbf{0})$ and $\mathscr{P} = (0, 0, \nabla \chi_t, \textbf{0})$ and apply them to the equation (\ref{3termodin}):
\begin{equation}
\begin{split}
\frac{\partial \Psi}{\partial \chi} \chi_t - B \chi_t &\leq 0  \\
\left( \frac{\partial \Psi}{\partial (\nabla \chi)} - \textbf{H} \right) \cdot \nabla \chi_t &\leq 0, \label{relBH}
\end{split}
\end{equation}
for each choice of $\chi_t$ and $\nabla \chi_t$. 

Under the hypothesis of small perturbations, we can assume the functionals $\textbf{B}$ and $\textbf{H}$ to be linearly depended on the dissipative variables:
\begin{equation}
B = B_{nd} + B_d \chi_t \ \ \ \text{and} \ \ \ \textbf{H} = \textbf{H}_{nd} + H_d \nabla \chi_t.
\end{equation}

Combining these two expressions together with the previous inequalities (\ref{relBH}), we obtain the following state laws
\begin{equation}
\begin{split}
&B_{nd} = \frac{\partial \Psi}{\partial \chi} = - \ell \vartheta + \xi + \sigma'(\chi), \ \ \ B_d \geq 0, \ \ \ \text{where} \ \xi \in \partial \beta(\chi),  \\
&\textbf{H}_{nd} = \frac{\partial \Psi}{\partial (\nabla \chi)} = \varepsilon \nabla \chi, \ \ \ H_d \geq 0. \label{relB1H1}
\end{split}
\end{equation}
Moreover, we set 
\begin{equation}
B_d = \mu \geq 0 \ \ \ \text{constant}, \ \ \ H_d = 0. \label{relB2H2}
\end{equation}

Furthermore, recalling the general formula for heat conductors $ \textbf{q} = - k(\vartheta) \nabla \vartheta$, we set the heat conductivity $k(\vartheta)$ to be a linear function of the temperature: 
\begin{equation} 
\textbf{q} = - k_0 \vartheta \nabla \vartheta, \ \ \ \text{with} \ k_0 >0;
\end{equation}
this choice is quite standard for many dielectrics, like ice or water.

At this point, we can rewrite the first law of thermodynamics (\ref{1termodin}) in the following way
\begin{align}
e_t &= - \text{div} \textbf{q} + r + (B_{nd} + B_d \chi_t)\chi_t + \textbf{H}_{nd} \cdot \nabla \chi_t   \nonumber \\
&= - \vartheta \text{div} \left( \frac{\textbf{q}}{\vartheta} \right) - \frac{1}{\vartheta} \textbf{q} \cdot \nabla \vartheta + r + B_{nd} \chi_t + B_d \left|\chi_t \right|^2 + \textbf{H}_{nd} \cdot \nabla \chi_t.
\end{align}
On the other hand, thank's to relations (\ref{relB1H1}), 
\begin{align}
e_t &= \Psi_t + \vartheta_t \eta + \vartheta \eta_t = \left[ \frac{\partial \Psi}{\partial \vartheta}\vartheta_t + \frac{\partial \Psi}{\partial \chi}\chi_t + \frac{\partial \Psi}{\partial (\nabla \chi)}\nabla \chi_t \right] + \vartheta_t \eta + \vartheta \eta_t  \nonumber \\
&= B_{nd} \chi_t + \textbf{H}_{nd} \cdot \nabla \chi_t + \vartheta \eta_t.
\end{align}
Then, by comparison, we have: $\vartheta \eta_t = - \vartheta \text{div} \left(- k_0 \nabla \vartheta \right) - \frac{1}{\vartheta} \textbf{q} \cdot \nabla \vartheta + r + \mu \left|\chi_t \right|^2$.

Now we are allowed to neglect the high order non linearities, thank's to the assumption of small perturbations, divide the equation by $\vartheta >0$ and explicit the entropy $\eta$:
\begin{equation}
\partial_t (c_s \log \vartheta + \ell \chi) - k_0 \Delta \vartheta = g, \label{eq1modello}
\end{equation}
where $g:= r/\vartheta$.

Finally, assuming that no external microscopic force acts on the body ($b=0$ in equation (\ref{Hboundary})), thank's to relations (\ref{relB1H1}) and (\ref{relB2H2}), the microforces balance equations turns out to be the following parabolic non linear equation
\begin{equation}
\varepsilon \Delta \chi = - \ell \vartheta + \xi + \sigma'(\chi) + \mu \chi_t,
\end{equation}
with $\xi \in \partial \beta (\chi)$.

The above equations are combined with suitable boundary and initial conditions. In particular, concerning boundary conditions and recalling (\ref{Hboundary}), we fix a homogeneous Neumann condition for $\chi$:
\begin{equation}
\textbf{H} \cdot \nu = \varepsilon \partial_{\nu} \chi = 0 \ \ \ \text{on} \ \partial \Omega \times (0, T),
\end{equation}
with $\nu$ the outer normal derivative on the boundary of the domain; while, for the absolute temperature, we fix a Robin condition 
\begin{equation}
\partial_\nu \vartheta + \alpha \vartheta = h \ \ \ \text{on} \ \partial \Omega \times (0, T), 
\end{equation}
with $\alpha$ a suitable real-valued function defined on $\partial \Omega$ and $h$ a boundary heat source.

The initial data are the following
\begin{equation}
\log \vartheta (0) = \log \vartheta_0 \ \ \ \text{and} \ \ \ \chi (0) = \chi_0.
\end{equation}

\section{Main results}
\label{result}

In this section we show the rigorous formulation of our problem (\ref{intro1})-(\ref{intro2}) and we state our main results. 

\subsection{Well-posedness of problem $(P_\mu)$, with $\mu>0$}
Let consider a bounded, connected domain $\Omega \subseteq \textbf{R}^3$, with suitably regular boundary $\Gamma := \partial \Omega$ . Let $T >0$ be the final time and $Q := \Omega \times (0,T)$ and $\Sigma := \Gamma \times (0,T)$ be the parabolic cylinder and the parabolic boundary, respectively. 

We set the following Hilbert triplet (see \cite{brezisfunz})
\begin{equation}
(V,H,V')=(H^1(\Omega), L^2(\Omega), (H^1(\Omega))' )
\end{equation}
with the usual standard norms and inner products. 

Let define the following linear, continuous operators 
\begin{equation}
\begin{split}
&B: V \rightarrow V', \ \ \ 
\langle Bu, v \rangle = \int_\Omega{\nabla u \cdot \nabla v \, dx} + \int_{\Gamma}{\alpha \, u \,v \,ds} \ \ \ \forall \, u,v \in V; \\
&A: V \rightarrow V', \ \ \ 
\langle Au, v \rangle = \int_\Omega{\nabla u \cdot \nabla v \, dx} \ \ \ \forall \, u,v \in V; 
\end{split}
\end{equation}
with $\alpha \in L^{\infty}(\Gamma)$:
\begin{equation}
0 < \overline{\alpha} \leq \alpha(x) \leq \widehat{\alpha}  \ \ \ \text{a.e. on} \ \Gamma, \ \text{for some} \ \overline{\alpha}, \widehat{\alpha} \in \textbf{R}_+. \label{alpha}
\end{equation}

\begin{remark}
The operator $B$ is symmetric and coercive, thus it defines a scalar product in $V$, which is equivalent to the standard one (thanks to Poincar\'e's inequality). From now on, we refer to $||\cdot||_V$ for the norm induced by $B$.
\end{remark}

The hypotheses on the source terms are the following ones
\begin{equation}
\begin{split}
&g \in C^0([0,T]; H),  \\
&h \in L^2(0,T;L^{2}(\Gamma)) \cap W^{1,1}(0,T; L^{2}(\Gamma)), \ \ \ h \geq 0 \ \ \ \text{a.e. in} \ \Sigma; \label{datogh}
\end{split}
\end{equation}
moreover, we define the operator $w \in C^0([0,T];V')$ as
\begin{equation}
\langle w(t), v \rangle = \int_\Omega{g (t)\, v dx} + \int_{\Gamma}{h (t) \,v ds} \ \ \ \forall \, t \in (0,T), \ \forall \, v \in V. \label{defw}
\end{equation}

The constraints on the phase parameter are given by the following functions 
\begin{equation}
\begin{split}
&\beta: \textbf{R} \rightarrow [0, + \infty] \ \ \text{is a proper, convex, lower semi-continuous function} \\
&\text{such that} \ \beta(0) = 0;   \\
&\sigma \in C^1(\textbf{R}), \ \ \sigma' \ \text{is Lipschitz continuous, with Lipschitz constant} \ c_L. \label{betasigma}
\end{split}
\end{equation}
Hence, we consider the subdifferential $\partial \beta$ of the function $\beta$ which turns out to be a maximal monotone graph, with $\partial \beta(0) \ni 0$ (see \cite[Chap. II]{brezis}). 

\begin{remark}
Thanks to (\ref{betasigma}), we deduce the following polynomial growth for $\sigma$
\begin{equation}
\left| \sigma (r) \right| \leq c_\sigma (1 + r^2) \ \ \ \forall \, r \in \textbf{R}. \label{stimasigma}
\end{equation}
\end{remark}

Finally, the initial data has the following regularity properties
\begin{equation}
\begin{split}
&\chi_{0,\mu} \in V, \ \ \beta(\chi_{0,\mu}) \in L^1(\Omega)  \\
&\vartheta_0 \in L^{\infty}(\Omega), \ \  \vartheta_0 > 0 \ \ \text{a.e. in} \ \Omega \ \text{and} \ 1/\vartheta_0 \in L^{\infty}(\Omega). \label{chizerothetabound}
\end{split}
\end{equation}
Moreover, these conditions imply $0 < \vartheta_* \leq \vartheta_0 (x) \leq \vartheta^*$, for a.e. $x \in \Omega$, for some $\vartheta_*, \vartheta^* \in \textbf{R}_+$.

In conclusion, our problem $(P_\mu)$, with $\mu >0$ fixed, turns out to be the following: we search for a pair $(\vartheta_\mu, \chi_\mu)$ satisfying
\begin{align}
&\partial_t \left( \log \vartheta_\mu  + \chi_\mu \right) + B\vartheta_\mu = w \ \ \ \text{in} \ V' \ \text{and a.e. in} \ (0,T) \label{1eqPmu}\\
&\mu \partial_t \chi_\mu + A\chi_\mu + \xi_\mu + \sigma'(\chi_\mu) = \vartheta_\mu \ \ \ \text{a.e. in} \ Q  \label{2eqPmu}\\
&\xi_\mu \in \partial \beta(\chi_\mu) \ \ \ \text{a.e. in} \ Q\\
&\log \vartheta_\mu(0) = \log \vartheta_0 \ \ \ \text{and} \ \ \ \chi_\mu(0) = \chi_{0,\mu} \ \ \ \text{a.e. in} \ \Omega. \label{ultimaPmu}
\end{align}
\begin{remark} 
In this formulation of the problem $(P_\mu)$, one can notice that some coefficients are missing, in particular the energy interface coefficient $\varepsilon$, the latent heat $\ell$, the specific heat $c_s$ and the heat conductivity $k_0$.
For the sake of simplicity,  these physical constants had been normalized to $1$.
\end{remark}

Moreover, thanks to the properties of the functionals $A$ and $B$, the following functional spaces are well-defined
\begin{equation}
\begin{split}
& D(A;H) := A^{-1}(H) = \left\{ u \in H^2(\Omega) \left| \, \partial_\nu u = 0 \right. \right\} \\
& D(B;H) := B^{-1}(H) = \left\{ u \in V \left| \, \Delta u \in H, \ \partial_{\nu}u + \alpha u = h \right. \right\}.
\end{split}
\end{equation}

\begin{thm}
\label{teo1}
Let $\mu >0$ and $T>0$ fixed. Under the assumption (\ref{alpha}), (\ref{datogh}), (\ref{betasigma}), (\ref{chizerothetabound}), there exists a unique pair $(\vartheta_\mu, \chi_\mu)$ and there exists a selection $\xi_\mu$ such that
\begin{equation}
\begin{split}
&\vartheta_\mu \in L^2(0,T; V), \ \ \ \vartheta_\mu > 0 \ \ \ \text{a.e. in} \ Q  \\
&\log \vartheta_\mu \in L^{\infty}(0,T;H) \cap H^1(0,T; V')  \\
&\chi_\mu \in L^2(0,T; D(A;H)) \cap H^1(0,T; H)  \\
&\xi_\mu \in L^2(Q)
\end{split}
\end{equation}
satisfying the problem (\ref{1eqPmu})-(\ref{ultimaPmu}).

If the norms of all data, related to (\ref{datogh}) and (\ref{chizerothetabound}), are bounded by a positive constant $M$, then the solution $(\vartheta_\mu, \chi_\mu, \xi_\mu)$ satisfies the following estimate
\begin{gather}
\left\| \vartheta_\mu \right\|_{L^2(0,T;V)} + \left\| \log \vartheta_\mu \right\|_{L^{\infty}(0,T;H) \cap H^1(0,T; V')} + \left\| \chi_\mu \right\|_{L^2(0,T; D(A;H)) \cap H^1(0,T; H)} \nonumber \\
+ \left\| \xi_\mu \right\|_{L^2(0,T; H)} \leq M'
\end{gather}
where $M' = M'(\Omega, T, M)$. 

Moreover, the components $(\vartheta_\mu, \chi_\mu)$ of solution continuously depend on data, in the following sense: if $(g_i, h_i, \vartheta_{0i}, \chi_{0,\mu, i})$, $i =1,2$, are two sets of data, whose norms are bounded by two constants $M_1$ and $M_2$ respectively, then the corresponding solutions $(\vartheta_{\mu,i}, \chi_{\mu,i})$ fulfil the following estimate
\begin{gather}
\int_{Q_t}{(\log \vartheta_{\mu,1} - \log \vartheta_{\mu,2}) (\vartheta_{\mu,1} - \vartheta_{\mu,2})} + \int_{Q_t}{(\xi_{\mu,1} - \xi_{\mu,2}) (\chi_{\mu,1} - \chi_{\mu,2})}  \nonumber \\
+ \int_Q{\left| \nabla (\chi_{\mu,1} - \chi_{\mu,2}) \right|^2} + \mu \int_{\Omega}{\left|\chi_{\mu,1}(t) - \chi_{\mu,2}(t) \right|^2} 
%+ \int_{\Omega}{\left| \left(1 \ast \nabla (\vartheta_1 - \vartheta_2)\right)(t) \right|^2} +  \int_{\Gamma}{\left| \left(1 \ast (\vartheta_1 - \vartheta_2)\right)(t) \right|^2}  \nonumber \\
+ \left\| 1 \ast (\vartheta_{\mu,1} - \vartheta_{\mu,2}) (t) \right\|^2_V \nonumber \\
\leq M'' \left[ \left\| \eta_{0,\mu,1} - \eta_{0,\mu, 2} \right\|^2_H + \left\|g_1 - g_2\right\|^2_{L^2(0,T;H)} + \left\|h_1 - h_2\right\|^2_{L^2(0,T;L^2(\Gamma))} \right]
\end{gather}
$\forall \, t \in (0,T)$; where $\eta_{0,\mu,i} := \log \vartheta_{0i} + \chi_{0,\mu,i}$, for $i=1,2$, and $M'' = M''(\Omega, T, M_1, M_2) \in \textbf{R}_+$. 
\end{thm}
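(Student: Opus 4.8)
\section*{Proof proposal}

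The plan is to prove Theorem \ref{teo1} via the standard machinery announced in the introduction: a double approximation (regularization of the nonlinearities $\beta$, $\sigma$ and the data, followed by a Faedo--Galerkin discretization), uniform a priori estimates, and passage to the limit by compactness; then uniqueness and continuous dependence are obtained together from a single contraction-type estimate. More precisely, I would first replace $\partial\beta$ by its Yosida regularization $\beta_\lambda'$ (so $\beta_\lambda$ is $C^{1,1}$ with Lipschitz derivative), mollify $\sigma'$ if needed, and smooth the initial data so that $\vartheta_{0}$ is bounded away from $0$ and $\infty$ and $\chi_{0,\mu}\in V$; this removes every singularity except the logarithm, which I would handle by introducing the new unknown $\eta_\mu:=\log\vartheta_\mu$, i.e. writing $\vartheta_\mu=e^{\eta_\mu}$, so that \eqref{1eqPmu} becomes $\partial_t(\eta_\mu+\chi_\mu)+B e^{\eta_\mu}=w$. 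On a finite-dimensional Galerkin space (spanned by eigenfunctions of $B$, which are also convenient for the Robin boundary term) the resulting ODE system has a local solution by Cauchy--Lipschitz, and the a priori estimates below make it global.

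The a priori estimates are the heart of the argument and I would derive them in the following order. \emph{(i) Entropy/energy estimate:} test \eqref{1eqPmu} by $\vartheta_\mu$ and \eqref{2eqPmu} by $\partial_t\chi_\mu$ and add; the crucial cancellation is $\int_\Omega \partial_t\chi_\mu\,\vartheta_\mu$ appearing with opposite signs, while $\partial_t(\log\vartheta_\mu)\,\vartheta_\mu=\partial_t\vartheta_\mu$ is a perfect time derivative and $\langle B\vartheta_\mu,\vartheta_\mu\rangle=\|\vartheta_\mu\|_V^2$ controls the gradient and the boundary term $\int_\Gamma\alpha\vartheta_\mu^2$. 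This yields, after Gronwall and using \eqref{stimasigma} to absorb $\sigma$, bounds for $\vartheta_\mu$ in $L^2(0,T;V)\cap L^\infty(0,T;H)$ (the $L^\infty$-in-time part coming from $\int_\Omega\vartheta_\mu$, which is $\ge$ its integral since $\vartheta_\mu>0$), for $\sqrt\mu\,\partial_t\chi_\mu$ in $L^2(Q)$, for $\chi_\mu$ in $L^\infty(0,T;V)$, and for $\beta_\lambda(\chi_\mu)$ in $L^\infty(0,T;L^1(\Omega))$. \emph{(ii) Positivity of temperature:} once $\eta_\mu=\log\vartheta_\mu\in L^\infty(0,T;H)$ and $\vartheta_\mu=e^{\eta_\mu}$, positivity is automatic; a comparison/maximum-principle argument on the first equation (using $h\ge0$ and $w$ "positive" in the appropriate sense) gives the uniform lower and upper bounds $\vartheta_*\le\vartheta_\mu\le\vartheta^*$ needed to keep $\log$ and its derivative under control — this is where the sign hypotheses on $h$ and the $L^\infty$ bounds on $\vartheta_0,1/\vartheta_0$ are spent. \emph{(iii) Elliptic regularity for $\chi$:} now that $\vartheta_\mu$ is bounded in $L^2(0,T;H)$ (indeed in $L^\infty(0,T;H)$) and, by (i), $\partial_t\chi_\mu\in L^2(Q)$, rewrite \eqref{2eqPmu} as $A\chi_\mu+\xi_\mu=\vartheta_\mu-\mu\partial_t\chi_\mu-\sigma'(\chi_\mu)\in L^2(Q)$; testing by $\xi_\mu=\beta_\lambda'(\chi_\mu)$ (monotonicity of $\beta_\lambda'$ makes $\langle A\chi_\mu,\beta_\lambda'(\chi_\mu)\rangle\ge0$) bounds $\xi_\mu$ in $L^2(Q)$ uniformly in $\lambda$ and $\mu$, and then parabolic/elliptic regularity gives $\chi_\mu\in L^2(0,T;H^2(\Omega))=L^2(0,T;D(A;H))$. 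Finally, comparison in \eqref{1eqPmu} gives $\partial_t(\log\vartheta_\mu)\in L^2(0,T;V')$, i.e. $\log\vartheta_\mu\in H^1(0,T;V')$.

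With these $\lambda$- and discretization-uniform bounds in hand, the passage to the limit (first $\to$ Galerkin dimension $\to\infty$, then $\lambda\to0$) is routine: weak and weak-$*$ limits in the reflexive spaces above, Aubin--Lions compactness ($\chi_\mu$ strong in $C^0([0,T];H)$ and in $L^2(0,T;V)$, $\vartheta_\mu$ strong in $L^2(Q)$ via the $V'$-bound on $\partial_t(\log\vartheta_\mu)$ together with a Minty/monotonicity argument to identify $e^{\eta_\mu}$ and to pass the limit in $Be^{\eta_\mu}$), and the standard maximal-monotonicity argument ($\xi_\mu\in\partial\beta(\chi_\mu)$) using $\limsup\int\xi_\lambda\chi_\lambda\le\int\xi\chi$ from strong convergence of $\chi_\mu$. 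Uniqueness and continuous dependence I would prove simultaneously: take two solutions, subtract both equations, introduce the primitive $1*(\vartheta_{\mu,1}-\vartheta_{\mu,2})$ as a test function in the (time-integrated) difference of the first equation — this is the classical device that turns the logarithmic term into the monotone quantity $(\log\vartheta_{\mu,1}-\log\vartheta_{\mu,2})(\vartheta_{\mu,1}-\vartheta_{\mu,2})\ge0$ — test the difference of the second equation by $\chi_{\mu,1}-\chi_{\mu,2}$, add, use monotonicity of $\partial\beta$ and Lipschitz continuity of $\sigma'$, and close with Gronwall to get exactly the stated inequality (with the $\mu\int_\Omega|\chi_{\mu,1}(t)-\chi_{\mu,2}(t)|^2$ and $\|1*(\vartheta_{\mu,1}-\vartheta_{\mu,2})(t)\|_V^2$ terms on the left). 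I expect the main obstacle to be step (ii) combined with the limit in $Be^{\eta_\mu}$: keeping the temperature uniformly bounded away from $0$ and $\infty$ — so that the logarithm and the nonlinear diffusion remain well-behaved in the limit — and simultaneously controlling the Robin boundary contributions via trace theorems (as emphasized in the introduction, where the Robin condition is singled out as the source of the new technical difficulties relative to \cite{model,global}) is the delicate point; everything else follows the well-trodden path for phase-field systems with entropy balance.
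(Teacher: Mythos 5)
Your overall skeleton coincides with the paper's (Yosida regularization plus Faedo--Galerkin, a priori estimates, compactness, a Barbu/Minty-type identification of the nonlinearities, and a contraction estimate on the time-integrated first equation for uniqueness and continuous dependence), but two of your steps would fail as described. The most serious is step \emph{(ii)}: you propose to obtain uniform bounds $\vartheta_*\le\vartheta_\mu\le\vartheta^*$ by a comparison/maximum-principle argument and to ``spend'' the hypotheses $h\ge 0$ and $\vartheta_0,1/\vartheta_0\in L^\infty(\Omega)$ there. No such bounds hold, and none are claimed in the theorem: the source $g$ is only $C^0([0,T];H)$ and the coupling term $\partial_t\chi_\mu$ enters the temperature equation with no sign, so there is no comparison structure to exploit. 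What the paper actually proves is $\log\vartheta_\mu\in L^\infty(0,T;H)$, obtained by testing the first equation by $\text{Log}_\varepsilon\vartheta_\varepsilon$ (this is precisely where $h\ge 0$ is used, to control $\int_{\Sigma_t}h\,\text{Log}_\varepsilon\vartheta_\varepsilon$ by $\int h\,\vartheta_\varepsilon$ on the set $\{\vartheta_\varepsilon>1\}$, and where the $L^\infty$ bounds on $\vartheta_0$ and $1/\vartheta_0$ bound the initial term $\|\text{Log}_\varepsilon\vartheta_{0\varepsilon}\|_{L^\infty(\Omega)}$); the positivity $\vartheta_\mu>0$ is then a soft consequence of identifying the weak limit $\mathscr{L}$ of $\text{Log}_\varepsilon\vartheta_\varepsilon$ with $\log\vartheta_\mu$ via maximal monotonicity, since $D(\log)=(0,+\infty)$. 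Relatedly, your claim in \emph{(i)} that the basic estimate yields $\vartheta_\mu\in L^\infty(0,T;H)$ is incorrect: $\int_{Q_t}\partial_t(\log\vartheta_\mu)\,\vartheta_\mu=\int_\Omega\vartheta_\mu(t)-\int_\Omega\vartheta_0$ only controls $\|\vartheta_\mu\|_{L^\infty(0,T;L^1(\Omega))}$, and the uniform-in-$\varepsilon$ estimates never give more than $L^2(0,T;V)$ for the temperature.

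The second deviation is the substitution $\vartheta_\mu=e^{\eta_\mu}$, which converts the linear diffusion $B\vartheta_\mu$ into the nonlinear, non-Lipschitz operator $Be^{\eta_\mu}$; you provide no mechanism to pass to the limit in this term (weak convergence of $\eta_\mu$ does not identify $e^{\eta_\mu}$, and you have no strong compactness for $\eta_\mu$ in a topology in which $r\mapsto e^r$ is continuous). The paper avoids this entirely by keeping $\vartheta$ as the primary unknown and regularizing the logarithm as $\text{Log}_\varepsilon(r)=\varepsilon r+\log_\varepsilon(r)$: the added linear term makes $\text{Log}'_\varepsilon\ge\varepsilon$, which is exactly what puts the Galerkin ODE system in normal form (via the Implicit Function Theorem) and yields the $\varepsilon$-dependent bound on $\partial_t\vartheta_n$, while the diffusion stays linear so that no Minty argument is needed for $B\vartheta$. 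A final, minor imprecision: in the uniqueness step one integrates the first equation in time and tests the \emph{difference of the integrated equations} by $\vartheta_{\mu,1}-\vartheta_{\mu,2}$ itself, not by its primitive $1\ast(\vartheta_{\mu,1}-\vartheta_{\mu,2})$; it is this choice that simultaneously produces the monotone term $(\log\vartheta_{\mu,1}-\log\vartheta_{\mu,2})(\vartheta_{\mu,1}-\vartheta_{\mu,2})\ge 0$ and the quantity $\left\|1\ast(\vartheta_{\mu,1}-\vartheta_{\mu,2})(t)\right\|_V^2$ on the left-hand side.
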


\subsection{Asymptotic behavior as $\mu \searrow 0$}
Starting from problem $(P_\mu)$, we can formally write down the limit problem $(P_0)$, consisting of the first equation (\ref{1eqPmu}) integrated in time and of the second equation (\ref{2eqPmu}) without the time derivative of the phase parameter $\chi$. 

Since we loose regularity estimates for the term $\partial _t \chi$ as $\mu \searrow 0$, the logarithmic term $\log \vartheta$ is less regular and takes values only in $V'$; thus, we have to modify the formulation of the problem and replace the term $\log \vartheta$ by a new unknown function $\zeta \in L^2(0,T; V')$ which generalizes the equality $\zeta = \log \vartheta$, through a suitable functional relation between $\zeta$ and $\vartheta$. Basically, we define an \textit{ad hoc} generalized logarithm as it has been done in  \cite{convergence}, to which we refer for a detailed discussion on this argument.

\begin{defn}
For $\vartheta \in L^2(0,T;V)$ we define $\text{Log} \, \vartheta$ as the set of $\zeta \in L^2(0,T;V')$ such that 
\begin{equation}
\langle \zeta, \theta - \vartheta \rangle + \int_Q{\psi(\vartheta)} \leq \int_Q{\psi(\theta)} \ \ \ \forall \, \theta \in L^2(0,T;V')
\end{equation}
where $\psi (\tau) = \tau (\log \tau - 1)$ if $\tau >0$, $\psi(0) =0$ and $\psi (\tau) = +\infty$ if $\tau <0$; we set $D(\text{Log}) = \left\{ \vartheta \in L^2(0,T;V) \ \left| \ \text{Log} \, \vartheta \neq \emptyset \right.  \right\}$.

At this point, we define $\Psi: L^2(0,T;V) \rightarrow (-\infty, + \infty]$ as
\begin{equation}
\Psi (v) = \int_Q{\psi(v)} \ \ \ \forall \, v \in L^2(0,T;V).
\end{equation}
$\Psi$ is proper, convex and $v \in D(\Psi)$ if and only if $v$ is non-negative. 

Moreover, it is possible to prove that $\partial \Psi: L^2(0,T;V) \rightarrow L^2(0,T;V')$ is well-defined and $\text{Log} \, v = \partial \Psi (v)$ (see \cite[Section 4]{convergence}).
\end{defn}

The problem $(P_0)$ can be written as follows
\begin{align}
&\zeta(t) + \chi(t) + 1 \ast B\vartheta (t) = 1 \ast w(t) + \eta_0 \ \ \ \text{in} \ V', \ \text{a.e. in} \ (0,T)\label{eq1P0}\\
&\zeta \in \text{Log}\,  \vartheta  \ \ \ \text{in} \ V', \ \text{a.e. in} \ (0,T) \\
&A \chi + \xi + \sigma'(\chi) = \vartheta \ \ \ \text{a.e. in} \ Q \\
&\xi \in \partial \beta(\chi) \ \ \ \text{a.e. in} \ Q, \label{ulteqP0}
\end{align}
being $\eta_0 := \log\vartheta_0 + \chi_0$; the symbol $\ast$ denotes the usual time convolution product
\begin{equation}
(a \ast b) (t) := \int_0^t{a(s) b(t-s) ds}, \ \ \ \forall \, t \in [0,T].
\end{equation}

To perform the asymptotic analysis as $\mu \searrow 0$ in problem $(P_\mu)$ (\ref{1eqPmu})-(\ref{ultimaPmu}), we ask further regularity for the functional $w$ (compare with (\ref{defw}))
\begin{equation}
\begin{split}
&g \in H^1(0,T;H)  \\
&h \in H^1(0,T; L^{2}(\Gamma)), \ h \geq 0 \ \ \ \text{a.e. in} \ \Sigma, \label{ghreg}
\end{split}
\end{equation}
which imply $w \in H^1(0,T;V')$. 

The hypotheses on initial data are the following ones
\begin{equation}
\begin{split}
&\chi_0 \in V, \ \beta(\chi_0) \in L^1(\Omega)   \\
&\vartheta_0 \in L^{\infty}(\Omega), \ \vartheta_0 > 0 \ \text{a.e. in} \ \Omega\ \ \text{e} \ 1 / \vartheta_0 \in L^{\infty}(\Omega). \label{datoinizioP0}
\end{split}
\end{equation}
We assume also that the sequence of initial data $\{\chi_{0,\mu}\}$ related to problems $(P_\mu)$ satisfies
\begin{equation}
\chi_{0,\mu} \xrightarrow{\mu \rightarrow 0} \chi_0 \ \ \ \text{in} \ V  \ \ \ \text{and} \ \ \
\left\| \chi_{0,\mu} \right\|_V + \left\| \beta(\chi_{0,\mu}) \right\|_{L^1(\Omega)} \leq c \ \ \ \forall \, \mu \in (0,1). \label{succchi0mu}
\end{equation}

Regarding the functions $\beta$ and $\sigma$, we require the same properties as before (see (\ref{betasigma})), together with some further hypotheses, which turn to be useful to prove the convergence of problem $(P_\mu)$ to $(P_0)$. In particular, we deal with two distinct hypotheses. 

\paragraph{Hypothesis 1.}
Let assume $\sigma$ to be a linear function and $\beta$ to satisfy the following condition 
\begin{equation}
\beta(r) \geq c_1 r^2 - c_2 \ \ \ \text{for each} \ r \in D(\beta),\ \text{with} \  c_1, c_2 \in \textbf{R}_+.\label{betaquadr}
\end{equation}

\paragraph{Hypothesis 2.}
Let assume the property
\begin{equation}
\Bigl[ (\xi_1 + \sigma'(\chi_1)) - (\xi_2 +\sigma'(\chi_2)) \Bigr] \left( \chi_1 - \chi_2 \right) \geq \rho \left( \chi_1 - \chi_2 \right)^2, \label{betasigmahp2}
\end{equation}
for each $\chi_i \in D(\partial \beta)$ and for each $\xi_i \in \partial \beta(\chi_i)$, $i=1,2$, for some positive constant $\rho$. 
Moreover, we ask that $\beta + \sigma$ has at least square growth
\begin{equation}
\beta(r) + \sigma(r) \geq c_1 r^2 - c_2 \ \ \ \text{for each} \ r \in D(\beta), \ \text{with} \ c_1, c_2 \in \textbf{R}_+. \label{betasigmaquadr}
\end{equation}
We can notice that the second hypothesis is fulfilled if, for example, $\partial \beta$ is strictly monotone and the growth of $\sigma'$ is dominated by $\partial \beta$.

\begin{thm}
\label{teo3}
Let $\mu \in (0,1)$. Assuming the same hypotheses of Theorem \ref{teo1}, let suppose that (\ref{ghreg}), (\ref{datoinizioP0}), (\ref{succchi0mu}) hold and let assume either Hypothesis 1 or 2 on the functions $\beta$ and $\sigma$.  

Then, the solution to problem $(P_\mu)$ (\ref{1eqPmu})-(\ref{ultimaPmu}), given by Theorem \ref{teo1}, converges to a solution to problem $(P_0)$ (\ref{eq1P0})-(\ref{ulteqP0}), as $\mu \searrow 0$, with respect to the natural topologies and with the following regularities
\begin{equation}
\begin{split}
&\vartheta \in L^2(0,T;V) \\
&\zeta \in L^{\infty}(0,T;V') \\
&\chi \in L^2(0,T;D(A;H))  \\
&\xi \in L^2(0,T;H).
\end{split}
\end{equation}
\end{thm}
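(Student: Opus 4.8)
The plan is to pass to the limit $\mu\searrow0$ in the solutions $(\vartheta_\mu,\chi_\mu,\xi_\mu)$ given by Theorem \ref{teo1}, using the uniform-in-$\mu$ estimates already built into that theorem together with a small amount of extra regularity coming from hypothesis \eqref{ghreg}. First I would establish a set of $\mu$-independent a priori bounds: from the energy-type estimate underlying Theorem \ref{teo1} one gets $\vartheta_\mu$ bounded in $L^2(0,T;V)$, $\log\vartheta_\mu$ bounded in $L^\infty(0,T;H)$, $\chi_\mu$ bounded in $L^2(0,T;D(A;H))$, $\xi_\mu$ bounded in $L^2(Q)$, and — crucially — $\sqrt{\mu}\,\partial_t\chi_\mu$ bounded in $L^2(Q)$, so that $\mu\,\partial_t\chi_\mu\to0$ strongly in $L^2(Q)$. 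One must check that the constant $M'$ in Theorem \ref{teo1} can indeed be taken independent of $\mu\in(0,1)$; this is where \eqref{succchi0mu} is used, to bound the initial data uniformly, and where the extra time-regularity of $g,h$ in \eqref{ghreg} enters. I would also integrate equation \eqref{1eqPmu} in time to obtain the convolution form $\zeta_\mu(t)+\chi_\mu(t)+1\ast B\vartheta_\mu(t)=1\ast w(t)+\eta_{0,\mu}$ with $\zeta_\mu:=\log\vartheta_\mu$; since $\chi_\mu$, $1\ast B\vartheta_\mu$ and $1\ast w+\eta_{0,\mu}$ are all bounded in $L^\infty(0,T;V')$ (resp.\ $L^\infty(0,T;H)$), this yields $\zeta_\mu$ bounded in $L^\infty(0,T;V')$.

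Next I would extract, along a subsequence $\mu_k\to0$, weak/weak-$*$ limits $\vartheta$, $\zeta$, $\chi$, $\xi$ in the respective spaces, and upgrade to strong convergences where needed. The compact embeddings $D(A;H)\hookrightarrow\hookrightarrow V\hookrightarrow\hookrightarrow H$ combined with an Aubin–Lions/Simon argument applied to the integrated first equation (which controls $\partial_t(1\ast B\vartheta_\mu)=B\vartheta_\mu$ in $L^2(0,T;V')$, hence gives time-equicontinuity of $1\ast(\zeta_\mu+\chi_\mu)$) should give strong convergence of $\chi_\mu$ in $L^2(0,T;V)$ and, more delicately, strong convergence of $\vartheta_\mu$ in an appropriate space. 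The strong convergence of $\vartheta_\mu$ is what I would use to identify $\zeta\in\mathrm{Log}\,\vartheta$: since $\mathrm{Log}=\partial\Psi$ is maximal monotone on $L^2(0,T;V)\times L^2(0,T;V')$, the inclusion passes to the limit provided $\zeta_\mu\rightharpoonup\zeta$ in $L^2(0,T;V')$ and $\vartheta_\mu\to\vartheta$ strongly in $L^2(0,T;V)$ (or at least $\limsup\langle\zeta_\mu,\vartheta_\mu\rangle\le\langle\zeta,\vartheta\rangle$). Similarly, $\xi_\mu\in\partial\beta(\chi_\mu)$ passes to the limit by maximal monotonicity once $\chi_\mu\to\chi$ strongly in $L^2(Q)$ and $\xi_\mu\rightharpoonup\xi$ weakly in $L^2(Q)$; the positivity $\vartheta\ge0$ a.e.\ is preserved in the limit, and in fact $\vartheta>0$ a.e.\ follows from the fact that $\zeta\in\mathrm{Log}\,\vartheta$ forces $\vartheta\in D(\Psi)$. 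Passing to the limit in the linear terms of \eqref{1eqPmu} (integrated) and \eqref{2eqPmu} is then routine, the term $\mu\,\partial_t\chi_\mu$ disappearing by the strong-to-zero bound above, so the limit quadruple solves \eqref{eq1P0}–\eqref{ulteqP0}.

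The role of the two alternative hypotheses on $\beta,\sigma$ is precisely to secure the strong convergence of $\vartheta_\mu$ and the identification of the nonlinear terms, since without control on $\partial_t\chi$ one cannot use compactness in time for $\chi_\mu$ directly. Under Hypothesis 1 (linear $\sigma$, $\beta(r)\ge c_1 r^2-c_2$), the quadratic coercivity of $\beta$ gives an extra bound on $\chi_\mu$ in $L^\infty(0,T;L^2(\Omega))$ from testing the stationary phase equation, and linearity of $\sigma'$ makes the $\sigma'(\chi_\mu)$ term trivial to pass to the limit; the strong convergence of $\vartheta_\mu$ in $L^2(Q)$ can then be obtained by a compactness argument on the integrated temperature equation. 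Under Hypothesis 2, the key is the $\rho$-monotonicity \eqref{betasigmahp2}: testing the difference of two phase equations, or the limit estimate, with $\chi_\mu-\chi$ and using \eqref{betasigmahp2} gives $\rho\|\chi_\mu-\chi\|^2_{L^2}\le(\vartheta_\mu-\vartheta,\chi_\mu-\chi)+(\text{terms}\to0)$, so that strong convergence of $\chi_\mu$ in $L^2(Q)$ is \emph{equivalent} to strong convergence of $\vartheta_\mu$ in a dual-type norm, and a standard "limsup" argument closes the loop. I expect the main obstacle to be exactly this step: proving that $\vartheta_\mu\to\vartheta$ strongly enough (in $L^2(0,T;V)$, or at least that $\limsup\int_Q\zeta_\mu\vartheta_\mu\le\int_Q\zeta\vartheta$) to identify $\zeta\in\mathrm{Log}\,\vartheta$, because the only time-regularity available for $\vartheta_\mu$ is the very weak one coming from the integrated equation, and the logarithmic nonlinearity is neither Lipschitz nor bounded. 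I would handle it by a careful convexity/lower-semicontinuity argument for the functional $\Psi$ along the lines of \cite{convergence}, combined with the energy identity for the integrated first equation tested by $\vartheta_\mu$ itself, which provides the missing $\limsup$ inequality.
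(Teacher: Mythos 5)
Your skeleton (uniform estimates, weak limits, identification of the maximal monotone graphs via a $\limsup$ inequality and Barbu's lemma) matches the paper's, and you correctly locate the two real obstacles: the absence of a uniform bound on $\partial_t\chi_\mu$ and the identification of $\zeta\in\mathrm{Log}\,\vartheta$. But the concrete devices you propose at those two points do not work, and the paper uses different ones. First, you cannot get strong convergence of $\chi_\mu$ in $L^2(0,T;V)$ from Aubin--Lions/Simon: the only time-derivative information is $\sqrt{\mu}\,\partial_t\chi_\mu$ bounded in $L^2(Q)$, which degenerates as $\mu\searrow0$, and time-equicontinuity of $1\ast(\zeta_\mu+\chi_\mu)$ does not separate $\chi_\mu$ from $\zeta_\mu$. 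The paper replaces compactness by a \emph{Cauchy estimate}: it writes the integrated equations for two parameters $\mu>\nu>0$, tests the differences by $\vartheta_\mu-\vartheta_\nu$ and $\chi_\mu-\chi_\nu$, and deduces (using \eqref{succchi0mu}) that $\{1\ast\vartheta_\mu\}$ is Cauchy in $L^{\infty}(0,T;V)$ and that $\{\nabla\chi_\mu\}$ is Cauchy in $L^2(Q)$ under Hypothesis 1, resp.\ $\{\chi_\mu\}$ is Cauchy in $L^2(0,T;V)$ under Hypothesis 2 via the $\rho$-monotonicity \eqref{betasigmahp2}. This is self-contained, whereas your variant of the monotonicity trick tests against $\chi_\mu-\chi$ and presupposes properties of the limit that are not yet established at that stage.

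Second, under Hypothesis 1 the Cauchy estimate controls only $\nabla\chi_\mu$, so both $\chi_\mu$ and $\vartheta_\mu$ converge merely weakly, and the $\limsup$ inequalities for $\int_Q\xi_\mu\chi_\mu$ and $\int_0^T\langle\zeta_\mu,\vartheta_\mu\rangle$ hinge on passing to the limit in the cross term $\int_Q\vartheta_\mu\chi_\mu$, a product of two weakly convergent sequences. A convexity/lower-semicontinuity argument for $\Psi$ cannot produce this. The paper's device is the weighted estimate of Lemma \ref{stimat}: one tests \eqref{1eqPmu} by $t^{2\gamma}\partial_t\vartheta_\mu$ and the time-differentiated \eqref{2eqPmu} by $t^{2\gamma}\partial_t\chi_\mu$ (this is where \eqref{ghreg} and the linearity of $\sigma$ enter), obtaining $\int_Q t^{2\gamma}\left|\partial_t\vartheta_\mu\right|^2/\vartheta_\mu\le c$; an interpolation argument with $\gamma=3/4$ then bounds $\partial_t(t^{\gamma}\vartheta_\mu)$ in $L^{14/13}(0,T;L^{12/7}(\Omega))$, and Simon's compactness theorem gives $t^{3/4}\vartheta_\mu\to t^{3/4}\vartheta$ strongly in $C^0([0,T];H)$, whence $\vartheta_\mu\chi_\mu\rightharpoonup\vartheta\chi$ weakly in $L^1(Q)$. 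Without this step (or an equivalent substitute) your identification of $\xi$ and $\zeta$ under Hypothesis 1 does not close. A smaller inaccuracy: the coercivity assumptions \eqref{betaquadr} and \eqref{betasigmaquadr} are used already in the first a priori estimate, tested by $\partial_t\chi_\mu$, to control $\|\chi_\mu(t)\|_H$ uniformly in $\mu$ --- the constant $M'$ of Theorem \ref{teo1} is not $\mu$-independent as it stands, so the estimates must be redone rather than merely ``checked''.
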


\begin{thm}
\label{teo4}
Under the same assumptions of Theorem \ref{teo3}, the components $(\vartheta, \chi)$ of the solution continuously depend on data; i.e. if $(g_i, h_i, \vartheta_{0i}, \chi_{0i})$, $i =1,2$, are two sets of data, whose norms are bounded by constants $M_1$ and $M_2$ respectively, then the corresponding solutions $(\vartheta_i, \chi_i)$ fulfil the following estimates. Under Hypothesis 1, we have
\begin{gather}
\int_{Q_t}{(\zeta_1 - \zeta_2) (\vartheta_1 - \vartheta_2)} + \int_{Q_t}{(\xi_1 - \xi_2) (\chi_1 - \chi_2)} 
+ \left\| \nabla (\chi_1 -\chi_2) \right\|^2_{L^2(Q)} +\nonumber \\
\left\| 1 \ast (\vartheta_1 - \vartheta_2) (t) \right\|^2_V  
%\int_{\Omega}{\left| \left(1 \ast \nabla (\vartheta_1 - \vartheta_2)\right)(t) \right|^2} + \nonumber \\
%+ \int_{\Gamma}{\left| \left(1 \ast (\vartheta_1 - \vartheta_2)\right)(t) \right|^2} \nonumber \\
\leq \tilde{M} \left[ \left\| \eta_{0,1} - \eta_{0,2} \right\|^2_H + \left\|g_1 - g_2\right\|^2_{L^2(0,T;H)} + \left\|h_1 - h_2\right\|^2_{L^2(0,T;L^2(\Gamma))} \right]
\end{gather}
$\forall \, t \in (0,T)$; where $\eta_{0i} = \log \vartheta_{0i} + \chi_{0i}$, $i=1,2$, and $\tilde{M} = \tilde{M}(\Omega, T, M_1, M_2) \in \textbf{R}_+$; while, under Hypothesis 2, we have
\begin{gather}
\int_{Q_t}{(\zeta_1 - \zeta_2) (\vartheta_1 - \vartheta_2)} 
+ \left\| \chi_1 - \chi_2 \right\|_{L^2(0,T;V)} + \left\| 1 \ast (\vartheta_1 - \vartheta_2) (t) \right\|^2_V \nonumber \\
%\int_{\Omega}{\left| \left(1 \ast \nabla (\vartheta_1 - \vartheta_2)\right)(t) \right|^2} + \nonumber \\
%+ \int_{\Gamma}{\left| \left(1 \ast (\vartheta_1 - \vartheta_2)\right)(t) \right|^2} \nonumber \\
\leq \overline{M} \left[ \left\| \eta_{0,1} - \eta_{0,2} \right\|^2_H + \left\|g_1 - g_2\right\|^2_{L^2(0,T;H)} + \left\|h_1 - h_2\right\|^2_{L^2(0,T;L^2(\Gamma))} \right],
\end{gather}
$\forall \, t \in (0,T)$, with $\overline{M} = \overline{M}(\Omega, T, M_1, M_2) \in \textbf{R}_+$. 
\end{thm}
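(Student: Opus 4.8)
The plan is to reproduce, in the integrated formulation \eqref{eq1P0}--\eqref{ulteqP0}, the contraction estimate already established for $(P_\mu)$ in Theorem \ref{teo1}, with the generalized logarithm $\zeta$ playing the role of $\log\vartheta$ and with the dissipative term $\mu\int_\Omega|\chi(t)|^2$ no longer available. Fix two data sets $(g_i,h_i,\vartheta_{0i},\chi_{0i})$, $i=1,2$, let $(\vartheta_i,\zeta_i,\chi_i,\xi_i)$ be corresponding solutions furnished by Theorem \ref{teo3}, and write $\vartheta:=\vartheta_1-\vartheta_2$ and, analogously, $\zeta,\chi,\xi,w,\eta_0,g,h$ for the differences of the remaining quantities. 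Subtracting \eqref{eq1P0} for the two solutions gives $\zeta(t)+\chi(t)+B(1\ast\vartheta)(t)=(1\ast w)(t)+\eta_0$ in $V'$ and a.e.\ in $(0,T)$, while subtracting the phase equation \eqref{ulteqP0} gives $A\chi+\xi+\sigma'(\chi_1)-\sigma'(\chi_2)=\vartheta$ a.e.\ in $Q$, with $\xi_i\in\partial\beta(\chi_i)$.

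The core step is to test the difference of the first equation with $\vartheta(s)\in V$ and integrate over $(0,t)$. Since $\zeta_i\in\mathrm{Log}\,\vartheta_i=\partial\Psi(\vartheta_i)$ and $\partial\Psi$ is monotone, the term $\int_0^t\langle\zeta(s),\vartheta(s)\rangle\,ds=\int_{Q_t}(\zeta_1-\zeta_2)(\vartheta_1-\vartheta_2)$ is nonnegative; this uses a localization-in-time property of $\mathrm{Log}$ (the pairing is nonnegative on every subinterval $(0,t)$), which I would either quote from \cite[Sec.\ 4]{convergence} or re-derive by applying the defining variational inequality of $\mathrm{Log}$ to the competitors $\theta=\vartheta_i\,\mathbf 1_{(0,t)}+\vartheta_j\,\mathbf 1_{(t,T)}$. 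The operator term becomes $\int_0^t\langle B(1\ast\vartheta)(s),\tfrac{d}{ds}(1\ast\vartheta)(s)\rangle\,ds=\tfrac12\|(1\ast\vartheta)(t)\|_V^2$ because $(1\ast\vartheta)(0)=0$ and $B$ is symmetric and coercive. In the remaining left-hand term I substitute $\vartheta=A\chi+\xi+\sigma'(\chi_1)-\sigma'(\chi_2)$, obtaining $\int_0^t\langle\chi(s),\vartheta(s)\rangle\,ds=\|\nabla\chi\|_{L^2(Q_t)}^2+\int_{Q_t}\bigl[(\xi_1+\sigma'(\chi_1))-(\xi_2+\sigma'(\chi_2))\bigr]\chi$, using the Neumann condition in $D(A;H)$ to identify $\int_\Omega\chi\,A\chi=\|\nabla\chi\|_H^2$.

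Here the two hypotheses diverge. Under Hypothesis 1 the function $\sigma$ is linear, so $\sigma'(\chi_1)-\sigma'(\chi_2)=0$, and plain monotonicity of $\partial\beta$ gives $\int_{Q_t}(\xi_1-\xi_2)\chi\ge0$; I keep both $\int_{Q_t}(\xi_1-\xi_2)\chi$ and $\|\nabla\chi\|_{L^2(Q_t)}^2$ as nonnegative quantities on the left. Under Hypothesis 2 I invoke \eqref{betasigmahp2} to bound the combined nonlinear term below by $\rho\|\chi\|_{L^2(Q_t)}^2$, which together with $\|\nabla\chi\|_{L^2(Q_t)}^2$ controls $\|\chi\|_{L^2(0,t;V)}^2$ up to equivalence of norms. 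The right-hand side $\int_0^t\langle(1\ast w)(s)+\eta_0,\vartheta(s)\rangle\,ds$ is treated by integration by parts in time: writing $\vartheta=\tfrac{d}{ds}(1\ast\vartheta)$ it equals $\langle(1\ast w)(t)+\eta_0,(1\ast\vartheta)(t)\rangle-\int_0^t\langle w(s),(1\ast\vartheta)(s)\rangle\,ds$; Young's inequality absorbs a small multiple of $\|(1\ast\vartheta)(t)\|_V^2$ into the left-hand side, the norms $\|(1\ast w)(t)\|_{V'}$, $\|\eta_0\|_{V'}$ and $\|w\|_{L^2(0,T;V')}$ are estimated against $\|g\|_{L^2(0,T;H)}$, $\|h\|_{L^2(0,T;L^2(\Gamma))}$ and $\|\eta_0\|_H$ via \eqref{defw} and the trace inequality, and the leftover $\int_0^t\|(1\ast\vartheta)(s)\|_V^2\,ds$ is absorbed by Gronwall's lemma, yielding the two asserted estimates.

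The step I expect to be the main obstacle is precisely the rigorous handling of the $\mathrm{Log}$-term: since $\mathrm{Log}$ is defined only as the subdifferential of the global-in-time functional $\Psi$ on $L^2(0,T;V)$, the passage from the defining variational inequality to the nonnegativity of $\int_{Q_t}(\zeta_1-\zeta_2)(\vartheta_1-\vartheta_2)$ on each $(0,t)$ must be justified carefully, and one must also keep track of the regularity $\zeta_i\in L^\infty(0,T;V')$ from Theorem \ref{teo3} so that all pairings and the time integration by parts are licit. Everything else is a routine adaptation of the contraction estimate of Theorem \ref{teo1}; the structural reason the conclusion splits into two cases is that, with $\mu\int_\Omega|\chi(t)|^2$ gone, the coercivity needed to control $\chi$ must come either from $\sigma'$ being constant (Hypothesis 1) or from the strengthened monotonicity \eqref{betasigmahp2} (Hypothesis 2).
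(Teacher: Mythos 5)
Your proposal is correct and follows essentially the same route as the paper: subtract the integrated entropy equation and the stationary phase equation for the two solutions, test by $\vartheta_1-\vartheta_2$ and $\chi_1-\chi_2$, identify $\int_0^t\langle B(1\ast\vartheta),\vartheta\rangle$ with $\tfrac12\|(1\ast\vartheta)(t)\|_V^2$, split the nonlinear phase terms according to Hypothesis 1 (linearity of $\sigma$ plus plain monotonicity of $\partial\beta$) or Hypothesis 2 (the strengthened monotonicity \eqref{betasigmahp2} giving $V$-coercivity in $\chi$), and close with Young and Gronwall exactly as in the $(P_\mu)$ contraction estimate. The only place you go beyond the paper is the explicit localization-in-time argument for the nonnegativity of $\int_0^t\langle\zeta_1-\zeta_2,\vartheta_1-\vartheta_2\rangle$ via piecewise competitors in the defining inequality of $\mathrm{Log}$, a point the paper leaves to the reference \cite{convergence}; your argument for it is sound.
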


The following Sections \ref{exist} and \ref{mu} are devoted to the proof of the previous results.

\section{Proof of Theorem \ref{teo1}}
\label{exist}

In order to prove the well-posedness of $(P_\mu)$, $\mu >0$, we proceed by a double approximation of the problem: first we introduce a family of regularized problems $\{(P_\varepsilon) \}$ depending on a positive parameter $\varepsilon$; next, we prove the existence of a solution to the problem $(P_\varepsilon)$, $\varepsilon >0$ fixed, applying a standard Faedo-Galerkin method. Afterwards, we let $\varepsilon$ tend to zero and deal with the original problem $(P_\mu)$.

We base our proof on the guidelines of article \cite[Section 3, 4 and 5]{global}. The main differences with respect to \cite{global} are due to our boundary conditions: we will carefully detail this point in the proof.

\subsection{Hypotheses and preliminary results}
We consider the Yosida regularizations $\beta'_\varepsilon$ and $\log_\varepsilon$ of the maximal monotone graph $\partial \beta$ and $\log$ respectively (see Reference \cite[Chap. II]{brezis}), and define $\beta_\varepsilon$, $\text{Log}_\varepsilon: \textbf{R} \rightarrow \textbf{R}$ by
\begin{equation}
\beta_\varepsilon(r) := \int_0^r{\beta'_{\varepsilon}(s)ds} \ \ \ \text{and} \ \ \ \text{Log}_\varepsilon (r) := \varepsilon r + \log_\varepsilon(r).
\end{equation}
We can notice that both $\beta'_\varepsilon$ and $\text{Log}_\varepsilon$ are monotone and Lipschitz continuous. 

We need one more function, namely
\begin{equation}
I_{\varepsilon} (r) := \int_0^r{s \text{Log}'_{\varepsilon}(s) ds}, \ \ r\in \textbf{R},
\end{equation}
which is an approximation of the identity on $(0,+ \infty)$.

At this point, we state two lemmas which will be useful later.
\begin{lemma}[see Lemma 4.2, \cite{global}]
The function $\text{Log}_\varepsilon$ is differentiable, with derivative $\text{Log}'_\varepsilon$ such that
\begin{equation}
\varepsilon \leq \text{Log}'_\varepsilon (r) \leq \varepsilon + \frac{1}{r}, \ \ \ \forall \, r \in \textbf{R},
\end{equation}
provided that $\varepsilon$ is small enough.
\label{logprimo}
\end{lemma}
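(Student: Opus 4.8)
The plan is to reduce the statement to standard resolvent calculus for the maximal monotone graph $\log$ together with one implicit differentiation. By the very definition of the Yosida regularization, $\log_\varepsilon(r)=\log\bigl(J_\varepsilon(r)\bigr)$, where $J_\varepsilon$ is the resolvent: for each $r\in\textbf{R}$ the number $y:=J_\varepsilon(r)>0$ is the unique solution of $y+\varepsilon\log y=r$ (and then $\log_\varepsilon(r)=\tfrac{r-J_\varepsilon(r)}{\varepsilon}=\log y$). Since $t\mapsto t+\varepsilon\log t$ is a smooth, strictly increasing bijection of $(0,+\infty)$ onto $\textbf{R}$ with derivative $1+\varepsilon/t>0$, the implicit function theorem yields $J_\varepsilon\in C^1(\textbf{R})$ with $J_\varepsilon'(r)=\bigl(1+\varepsilon/J_\varepsilon(r)\bigr)^{-1}=J_\varepsilon(r)/\bigl(J_\varepsilon(r)+\varepsilon\bigr)\in(0,1)$. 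Hence $\log_\varepsilon\in C^1(\textbf{R})$, and from $\text{Log}_\varepsilon(r)=\varepsilon r+\log_\varepsilon(r)$ and the chain rule I obtain the key identity
\[
\text{Log}_\varepsilon'(r)=\varepsilon+\frac{J_\varepsilon'(r)}{J_\varepsilon(r)}=\varepsilon+\frac{1}{J_\varepsilon(r)+\varepsilon},\qquad r\in\textbf{R},
\]
from which both inequalities (and the asserted differentiability) will be read off.

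The lower bound is then immediate: $J_\varepsilon(r)+\varepsilon>0$ forces $\text{Log}_\varepsilon'(r)\geq\varepsilon$ (in fact strictly). For the upper bound it suffices to show $J_\varepsilon(r)+\varepsilon\geq r$, which, by the defining relation $J_\varepsilon(r)+\varepsilon\log J_\varepsilon(r)=r$, is equivalent to $\log J_\varepsilon(r)\leq1$, i.e.\ to $r-J_\varepsilon(r)\leq\varepsilon$, i.e.\ to $J_\varepsilon(r)\leq e$. Here I would invoke that $J_\varepsilon$ is a faithful approximation of the identity as $\varepsilon\searrow0$ — one has $|r-J_\varepsilon(r)|=\varepsilon\,|\log J_\varepsilon(r)|$ and $J_\varepsilon(r)\to r$ — so that, taking $\varepsilon$ small enough, $J_\varepsilon(r)$ remains $\leq e$ on the relevant range of $r$; then $1/(J_\varepsilon(r)+\varepsilon)\leq 1/r$ and therefore $\text{Log}_\varepsilon'(r)\leq\varepsilon+1/r$.

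I expect this last step to be the only point requiring care: passing from the clean bound involving $1/(J_\varepsilon(r)+\varepsilon)$ to the asserted one involving $1/r$ amounts exactly to checking that the linear term $\varepsilon r$ built into $\text{Log}_\varepsilon$ absorbs the defect $r-J_\varepsilon(r)\geq0$ created by the regularization, and it is there that the hypothesis ``$\varepsilon$ small enough'' enters. Everything else — the resolvent representation of $\log_\varepsilon$, its $C^1$ dependence on $r$, and the lower bound — is routine (cf.\ \cite[Chap.~II]{brezis}).
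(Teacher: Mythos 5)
The paper does not actually prove this lemma: it is imported verbatim from \cite[Lemma 4.2]{global}, so there is no in-paper argument to compare your proposal with. Your resolvent computation is certainly the intended route, and the identity
\[
\text{Log}'_\varepsilon(r)=\varepsilon+\frac{1}{J_\varepsilon(r)+\varepsilon},
\qquad J_\varepsilon(r)+\varepsilon\log J_\varepsilon(r)=r,\ \ J_\varepsilon(r)>0,
\]
together with the lower bound $\text{Log}'_\varepsilon\geq\varepsilon$, is correct and cleanly derived.

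The upper bound is where your argument has a genuine gap, and it cannot be closed in the form you propose. You correctly reduce $\text{Log}'_\varepsilon(r)\leq\varepsilon+1/r$ to $J_\varepsilon(r)+\varepsilon\geq r$, i.e.\ to $\log J_\varepsilon(r)\leq 1$, i.e.\ to $J_\varepsilon(r)\leq e$; but since $J_\varepsilon$ is increasing with $J_\varepsilon(e+\varepsilon)=e$, this is exactly the restriction $r\leq e+\varepsilon$ on $r$, not a condition that smallness of $\varepsilon$ can enforce. On the contrary, $J_\varepsilon(r)\to r$ as $\varepsilon\searrow0$, so for any fixed $r>e$ the condition $J_\varepsilon(r)\leq e$ fails for \emph{all} small $\varepsilon$: with $\varepsilon=0.1$ and $r=100$ one has $J_\varepsilon(r)\approx 99.54$ and $\text{Log}'_\varepsilon(r)=\varepsilon+1/99.64>\varepsilon+1/100$. (For $r\leq0$ the right-hand side is undefined or $\leq\varepsilon$, so the inequality cannot hold there either.) The printed statement is thus slightly too strong, and your honest flag that this is ``the only point requiring care'' identifies precisely the step that does not survive scrutiny. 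What your identity does yield — and all that the paper uses downstream — are the uniform bounds $\varepsilon\leq\text{Log}'_\varepsilon(r)\leq\varepsilon+1/\varepsilon$ for every $r$ (needed for the equivalent scalar product $(\cdot,\cdot)_{t,\textbf{u}}$), and, for $r>0$,
\[
r\,\text{Log}'_\varepsilon(r)=\varepsilon r+1+\varepsilon\,\frac{\log J_\varepsilon(r)-1}{J_\varepsilon(r)+\varepsilon}\leq\varepsilon r+1+\frac{\varepsilon}{e}\leq\varepsilon r+2,
\]
which is exactly what Lemma \ref{iepsilon} requires. You should either restrict the upper bound to $0<r\leq e+\varepsilon$, or replace it by a correct and sufficient variant such as $\text{Log}'_\varepsilon(r)\leq\varepsilon+(1+\varepsilon)/r$ for $r>0$ (which follows from $\log y\leq y+1+\varepsilon$) or the displayed estimate above.
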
 

\begin{lemma}[see Lemma 4.3, \cite{global}]
For all $r \in \textbf{R}$ the following inequality holds
\begin{equation}
I_{\varepsilon}(r) \leq \frac{\varepsilon}{2} r^2 + 2r,
\end{equation}
provided that $\varepsilon$ is sufficiently small.
\label{iepsilon}
\end{lemma}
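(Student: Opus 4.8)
The plan is to read the estimate straight off the definition $I_\varepsilon(r)=\int_0^r s\,\mathrm{Log}'_\varepsilon(s)\,ds$, using only the pointwise bounds on $\mathrm{Log}'_\varepsilon$ already provided by Lemma~\ref{logprimo}; no new ingredient is needed and the argument is essentially one line. First I would treat $r\ge 0$, which is the case that actually intervenes in the a priori estimates, since there $I_\varepsilon$ will be evaluated on nonnegative quantities (it is, after all, an approximation of the identity on $(0,+\infty)$). On the interval of integration $s\in[0,r]$ one has $s\ge 0$, so multiplying the inequality $\mathrm{Log}'_\varepsilon(s)\le \varepsilon+\tfrac1s$ of Lemma~\ref{logprimo} by $s$ does \emph{not} reverse it, giving $s\,\mathrm{Log}'_\varepsilon(s)\le \varepsilon s+1$ for $s>0$. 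Integrating from $0$ to $r$ then yields $I_\varepsilon(r)\le \int_0^r(\varepsilon s+1)\,ds=\tfrac{\varepsilon}{2}r^2+r\le \tfrac{\varepsilon}{2}r^2+2r$, which is the claim (with room to spare; the extra slack in the constant $2$ is what makes the bound robust).

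The one point that requires a word is the integrability of $s\,\mathrm{Log}'_\varepsilon(s)$ near $s=0$: the derivative $\mathrm{Log}'_\varepsilon$ is globally bounded because $\mathrm{Log}_\varepsilon=\varepsilon\,\mathrm{id}+\log_\varepsilon$ is Lipschitz (the Yosida regularization $\log_\varepsilon$ being Lipschitz), so the integrand is continuous up to the origin and the apparent singularity of the majorant $1/s$ is harmless. This is the only mildly delicate step, and it is not a genuine obstacle. The smallness requirement on $\varepsilon$ is inherited verbatim from Lemma~\ref{logprimo}, which is where it is really used. (For $r<0$ the same scheme, now invoking the lower bound $\mathrm{Log}'_\varepsilon(s)\ge\varepsilon$ together with the Lipschitz bound $\mathrm{Log}'_\varepsilon(s)\le \varepsilon+\tfrac1\varepsilon$ and the reversal of sign coming from $s\le 0$, gives a bound of the form $I_\varepsilon(r)\le c_\varepsilon r^2$; since only nonnegative arguments enter the estimates in which Lemma~\ref{iepsilon} is applied, the version above is all that is needed. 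An equivalent route, slightly longer, is to integrate by parts and identify $\int_0^r\log_\varepsilon(s)\,ds$ with the Moreau--Yosida approximation of $\psi$, but the direct computation is the shortest.)
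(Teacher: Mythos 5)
Your argument is correct and is precisely the intended one: the paper offers no proof of this lemma, deferring to Lemma 4.3 of \cite{global}, and the standard derivation there is exactly your one-line computation — multiply the bound $\mathrm{Log}'_\varepsilon(s)\le\varepsilon+\tfrac1s$ of Lemma \ref{logprimo} by $s\ge 0$ and integrate, obtaining $I_\varepsilon(r)\le\tfrac{\varepsilon}{2}r^2+r$, which is even slightly sharper than the stated bound. Your caveat about $r<0$ is also well taken: since $I'_\varepsilon(r)=r\,\mathrm{Log}'_\varepsilon(r)\le\varepsilon r<0$ there, one has $I_\varepsilon(r)\ge 0$ for $r\le 0$ while $\tfrac{\varepsilon}{2}r^2+2r<0$ at, say, $r=-1$ for small $\varepsilon$, so the inequality cannot hold literally for all real $r$ and the restriction to $r\ge 0$ (the only case invoked in the \emph{a priori} estimates, where $I_\varepsilon$ is evaluated at $\vartheta_{0,n}$ and $\vartheta_n(t)$) is the correct reading of the statement.
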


Finally, we introduce regularized data $\{ \vartheta_{0 \varepsilon}\}$ such that
\begin{equation}
\begin{split}
&\vartheta_{0 \varepsilon} \in V \ \ \forall \, \varepsilon >0, \ \ \ 0< \vartheta_* \leq \vartheta_{0 \varepsilon} \leq \vartheta^* \ \ \text{a.e. in} \ \Omega, \ \forall \, \varepsilon > 0,   \\ 
&\vartheta_{0 \varepsilon} \xrightarrow{\varepsilon \rightarrow 0}  \vartheta_0 \ \ \ \text{in} \ H \ \text{and a.e.} \ \Omega. \label{theta0eps}
\end{split}
\end{equation}

Hence, the approximating problem $(P_\varepsilon)$ is the following one
\begin{align}
&\partial _t \left( \text{Log}_{\varepsilon} \vartheta_{\varepsilon} + \chi_{\varepsilon} \right) + B\vartheta_{\varepsilon} = w  \ \ \ \text{in} \ V', \ \text{a.e. in} \ (0,T) \label{probleps1}\\
&\mu \partial_t \chi_{\varepsilon} + A \chi_{\varepsilon} + \beta'_\varepsilon(\chi_{\varepsilon}) + \sigma'(\chi_\varepsilon) = \vartheta_{\varepsilon} \ \ \ \text{a.e. in} \ Q \label{probleps2} \\
&\vartheta_{\varepsilon} (0) = \vartheta_{0\varepsilon} \ \ \ \text{and} \ \ \ \chi_{\varepsilon}(0) = \chi_{0,\mu} \ \ \ \text{a.e. in} \ \Omega. \label{problepsult}
\end{align}

\begin{thm}
\label{teoeps}
Let $\varepsilon > 0$ fixed. Under the same hypotheses of Theorem \ref{teo1} and with the further hypotheses (\ref{theta0eps}), there exists a unique pair $(\vartheta_\varepsilon, \chi_\varepsilon)$ such that
\begin{equation}
\begin{split}
&\vartheta_\varepsilon \in L^2(0,T;V) \cap H^1(0,T;H)  \\
&\chi_\varepsilon \in L^2(0,T;D(A;H)) \cap H^1(0,T;H) 
\end{split}
\end{equation}
and fulfilling the equations (\ref{probleps1})--(\ref{problepsult}) of problem $(P_\varepsilon)$.
\end{thm}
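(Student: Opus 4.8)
The plan is to follow the Faedo--Galerkin scheme announced by the author, exploiting the fact that for $\varepsilon$ fixed all the nonlinearities $\text{Log}_\varepsilon$, $\beta'_\varepsilon$, $\sigma'$ are Lipschitz continuous and that $\text{Log}'_\varepsilon\geq\varepsilon>0$ by Lemma~\ref{logprimo}. I would take $\{e_j\}_{j\geq1}$ to be the eigenfunctions of $A$ (the Neumann Laplacian), an orthonormal basis of $H$ and an orthogonal one of $V$, set $V_n=\mathrm{span}\{e_1,\dots,e_n\}$ and look for $\vartheta_\varepsilon^n,\chi_\varepsilon^n\in V_n$ solving the equations (\ref{probleps1})--(\ref{problepsult}) projected onto $V_n$, with initial data the $H$-projections of $\vartheta_{0\varepsilon}$ and $\chi_{0,\mu}$. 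Reading off the ODE system, equation (\ref{probleps2}) gives $\dot b_j$ explicitly (here $\mu>0$ is used), while equation (\ref{probleps1}) becomes $M(a)\dot a=F(a,b)$ with $M(a)_{jk}=\int_\Omega\text{Log}'_\varepsilon(\vartheta_\varepsilon^n)e_je_k$ symmetric and $\geq\varepsilon I$, hence invertible; the resulting system has a locally Lipschitz right-hand side, so Cauchy--Lipschitz yields a unique maximal solution, which the a priori estimates below promote to a global one on $[0,T]$.

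For the a priori bounds (uniform in $n$) I would perform three testings. First, test (\ref{probleps1}) with $\vartheta_\varepsilon^n$ and (\ref{probleps2}) with $\partial_t\chi_\varepsilon^n$ and add: the cross terms $\pm\int_\Omega\vartheta_\varepsilon^n\partial_t\chi_\varepsilon^n$ cancel, and using $\frac{d}{dt}\int_\Omega I_\varepsilon(\vartheta_\varepsilon^n)$ for the first term, the nonnegativity of $I_\varepsilon$ and of $\beta_\varepsilon$, the Lipschitz (hence linear) growth of $\sigma'$, Young's inequality on $\langle w,\vartheta_\varepsilon^n\rangle$ and Gronwall, one gets $\vartheta_\varepsilon^n$ bounded in $L^2(0,T;V)\cap L^\infty(0,T;H)$ and $\chi_\varepsilon^n$ bounded in $L^\infty(0,T;V)\cap H^1(0,T;H)$, with $\beta_\varepsilon(\chi_\varepsilon^n)$ bounded in $L^\infty(0,T;L^1(\Omega))$ (the initial contributions $I_\varepsilon(\vartheta_{0\varepsilon})$ and $\beta_\varepsilon(\chi_{0,\mu})$ being controlled via Lemma~\ref{iepsilon} and $\beta_\varepsilon\leq\beta$). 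Second, test (\ref{probleps2}) with $A\chi_\varepsilon^n$; the term $\int_\Omega\beta'_\varepsilon(\chi_\varepsilon^n)A\chi_\varepsilon^n=\int_\Omega\beta_\varepsilon''(\chi_\varepsilon^n)|\nabla\chi_\varepsilon^n|^2\geq0$ by monotonicity of $\beta'_\varepsilon$, so $\chi_\varepsilon^n$ is bounded in $L^2(0,T;D(A;H))$ and, from the equation, $\beta'_\varepsilon(\chi_\varepsilon^n)$ in $L^2(Q)$. Third, test (\ref{probleps1}) with $\partial_t\vartheta_\varepsilon^n$: the first term is $\geq\varepsilon\|\partial_t\vartheta_\varepsilon^n\|_H^2$, $\langle B\vartheta_\varepsilon^n,\partial_t\vartheta_\varepsilon^n\rangle=\frac12\frac{d}{dt}\|\vartheta_\varepsilon^n\|_V^2$, and $\langle w,\partial_t\vartheta_\varepsilon^n\rangle$ is split into the volume part $\int_\Omega g\,\partial_t\vartheta_\varepsilon^n$ (absorbed directly, since $g\in L^2(0,T;H)$) and the boundary part $\int_\Gamma h\,\partial_t\vartheta_\varepsilon^n$, which is integrated by parts in time and then estimated by the trace inequality $\|\cdot\|_{L^2(\Gamma)}\leq C\|\cdot\|_V$ together with $h\in W^{1,1}(0,T;L^2(\Gamma))$; an absorption/Gronwall argument then gives $\vartheta_\varepsilon^n$ bounded in $H^1(0,T;H)$ (indeed also in $L^\infty(0,T;V)$).

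With these bounds the passage to the limit $n\to\infty$ is routine: weak-$\ast$ compactness gives limits $\vartheta_\varepsilon,\chi_\varepsilon$ in the asserted spaces; Aubin--Lions applied to $\chi_\varepsilon^n$ (bounded in $L^2(0,T;D(A;H))\cap H^1(0,T;H)$) and to $\vartheta_\varepsilon^n$ (bounded in $L^2(0,T;V)\cap H^1(0,T;H)$) yields strong $L^2(Q)$ and a.e.\ convergence, hence $\text{Log}_\varepsilon(\vartheta_\varepsilon^n)\to\text{Log}_\varepsilon(\vartheta_\varepsilon)$, $\beta'_\varepsilon(\chi_\varepsilon^n)\to\beta'_\varepsilon(\chi_\varepsilon)$, $\sigma'(\chi_\varepsilon^n)\to\sigma'(\chi_\varepsilon)$ in $L^2(Q)$; the time-derivative terms pass by weak convergence of distributional derivatives; one recovers (\ref{probleps1}) in $V'$ a.e.\ and (\ref{probleps2}) a.e.\ in $Q$ (first against the $e_j$, then by density against all test functions); $\chi_\varepsilon(t)\in D(A;H)$ for a.e.\ $t$ follows by elliptic regularity since $A\chi_\varepsilon\in L^2(Q)$; and the initial conditions are inherited by continuity in time. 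Finally, for uniqueness I would subtract two solutions, integrate the first equation over $(0,t)$ (producing the convolution $1\ast\overline\vartheta$, $\overline\vartheta:=\vartheta_{\varepsilon,1}-\vartheta_{\varepsilon,2}$), test the result with $\overline\vartheta$ and the (differentiated) second equation with $\overline\chi:=\chi_{\varepsilon,1}-\chi_{\varepsilon,2}$, and add: the strong monotonicity $(\text{Log}_\varepsilon(r)-\text{Log}_\varepsilon(s))(r-s)\geq\varepsilon(r-s)^2$, the monotonicity of $\beta'_\varepsilon$ and the Lipschitz bound on $\sigma'$ give, after the cancellation of the $\int\overline\vartheta\,\overline\chi$-type terms, a Gronwall inequality for $\|\overline\chi\|_H^2+\|1\ast\overline\vartheta\|_V^2$, forcing $\overline\chi\equiv0$ and then $\overline\vartheta\equiv0$. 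I expect the only genuinely delicate point to be the third a priori estimate: the Robin datum $h$ forces the integration by parts in time of $\int_\Gamma h\,\partial_t\vartheta_\varepsilon^n$ and the subsequent absorption of $\|\vartheta_\varepsilon^n\|_{L^\infty(0,T;V)}\,\|\partial_t h\|_{L^1(0,T;L^2(\Gamma))}$ --- precisely the new difficulty, relative to the Dirichlet setting of \cite{global}, flagged in the introduction; everything else is a by-now standard regularized Galerkin argument.
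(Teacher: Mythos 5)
Your proposal is correct and follows essentially the same route as the paper: a Faedo--Galerkin discretization solved as an ODE system thanks to $\text{Log}'_\varepsilon \geq \varepsilon$, the same energy estimates (testing by $\vartheta^n_\varepsilon$ and $\partial_t\chi^n_\varepsilon$, then by $\partial_t\vartheta^n_\varepsilon$ with the time integration by parts of the boundary $h$-term, which is indeed the point where the Robin condition and $h\in W^{1,1}(0,T;L^2(\Gamma))$ enter), Aubin--Lions plus the Lipschitz continuity of the regularized nonlinearities to pass to the limit, and uniqueness via the time-integrated first equation, monotonicity of $\text{Log}_\varepsilon$ and $\beta'_\varepsilon$, and Gronwall. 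The only cosmetic differences are that the paper puts the discrete system in normal form via the implicit function theorem (after constructing a consistent initial point) rather than by directly inverting the positive definite mass matrix as you do, and that it obtains $\chi_\varepsilon\in L^2(0,T;D(A;H))$ by comparison and elliptic regularity after the limit instead of your extra test by $A\chi^n_\varepsilon$.
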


The next subsection is devoted to the proof of Theorem \ref{teoeps}.

\subsubsection{Proof of Theorem of existence of a solution to the problem $(P_\varepsilon)$}

\paragraph{Discrete problem.}
We proceed by using the standard Faedo-Galerkin procedure. First of all, let's introduce two increasing sequences $V_n$ and $W_n$ of finite dimensional subspaces of $V$, such that 
\begin{equation}
\overline{\bigcup_{n=0}^{\infty} V_n} =V \ \ \ \text{and} \ \ \  \overline{\bigcup_{n=0}^{\infty} W_n} =V;
\end{equation}
in particular, we choose these subspaces in such a way that $V_n \subseteq D(B;H)$ and $W_n \subseteq D(A;H)$. 

Moreover, we approximate the initial data by appropriate sequences of data such that
\begin{equation}
\begin{split}
\vartheta_{0,n} \in V_n \ \ \forall \, n, \ \ \ & \vartheta_{0,n} \xrightarrow{n \rightarrow +\infty} \vartheta_{0 \varepsilon} \ \ \ \text{in} \  V   \\ 
\chi_{0,n} \in W_{n} \ \ \forall \, n, \ \ \ & \chi_{0,n} \xrightarrow{n \rightarrow +\infty} \chi_{0} \ \ \ \text{in} \ V. \label{chi0ntheta0n}
\end{split}
\end{equation}

The discrete problem $(P_n)$ is the following one
\begin{align}
&\left( \partial_t(\text{Log}_\varepsilon \vartheta_n(t) + \chi_n(t)), v \right) _H + (B \vartheta_n(t), v)_H = (w(t), v)_H  \nonumber \\
&\text{for a.e.} \ t \in (0,T), \ \forall\,  v \in V_n   \label{eq1n} \\
&\mu (\partial_t \chi_n(t), u)_H + (\nabla \chi_n(t), \nabla u)_H + (\beta'_\varepsilon(\chi_n (t)) + \sigma'(\chi_n(t)),u)_H = (\vartheta_n (t), u)_H  \nonumber\\
&\text{for a.e.} \ t \in (0,T), \ \forall \, u \in W_{n} \label{eq2n}  \\
&\vartheta_n(0)=\vartheta_{0,n} \ \ \ \text{and} \ \ \ \chi_n (0) = \chi_{0,n} \ \ \ \ \text{a.e. in} \ \Omega. \label{ultimaeqn}
\end{align}

\begin{thm}
Let $n \in \textbf{N}$. Under the hypotheses of Theorem \ref{teoeps} and assuming $\vartheta_{0,n} \in V_n$ and $\chi_{0,n} \in W_{n}$,
the discrete problem $(P_n)$ (\ref{eq1n})-(\ref{ultimaeqn}) admits a unique pair $(\vartheta_n, \chi_n)$ as solution, such that
\begin{equation}
\vartheta_n \in C^1([0,T]; V_n) \ \ \ \text{and} \ \ \ \chi_n \in C^1([0,T];W_{n}).
\end{equation}
\end{thm}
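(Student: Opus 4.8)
The plan is to solve the discrete problem $(P_n)$ by reformulating it as a Cauchy problem for a system of ordinary differential equations in the finite-dimensional coefficient spaces, and then applying the Cauchy--Lipschitz (Picard--Lindel\"of) theorem together with a global-in-time extension argument. First I would fix bases $\{v_1,\dots,v_{d_n}\}$ of $V_n$ and $\{u_1,\dots,u_{e_n}\}$ of $W_n$ and write $\vartheta_n(t) = \sum_i a_i(t) v_i$, $\chi_n(t) = \sum_j b_j(t) u_j$. Testing (\ref{eq1n}) with each $v_k$ and (\ref{eq2n}) with each $u_k$ turns the problem into
\begin{align}
&\sum_i \dot a_i(t)\,(\text{Log}'_\varepsilon(\vartheta_n(t))\, v_i, v_k)_H + \dot b_\cdot\text{-terms} + \sum_i a_i(t)(Bv_i,v_k)_H = (w(t),v_k)_H \nonumber\\
&\mu \sum_j \dot b_j(t)(u_j,u_k)_H + \sum_j b_j(t)(\nabla u_j,\nabla u_k)_H + (\beta'_\varepsilon(\chi_n)+\sigma'(\chi_n),u_k)_H = (\vartheta_n,u_k)_H. \nonumber
\end{align}
The key structural point is that the ``mass matrix'' in front of $(\dot a,\dot b)$ is invertible: the $\chi_n$-block is $\mu$ times the Gram matrix of $\{u_j\}$, which is positive definite, and the $\vartheta_n$-block is the matrix $M(a) = \big((\text{Log}'_\varepsilon(\sum_i a_i v_i) v_i, v_k)_H\big)_{k,i}$, which by Lemma \ref{logprimo} satisfies $\xi^\top M(a)\xi \ge \varepsilon \|\sum_i \xi_i v_i\|_H^2 \ge \varepsilon\, c_n |\xi|^2$ for some $c_n>0$ (equivalence of norms on the finite-dimensional $V_n$), hence is uniformly positive definite, so the whole mass matrix is invertible with inverse depending continuously (indeed smoothly, since $\text{Log}'_\varepsilon$ is Lipschitz and locally $C^\infty$ away from issues — at worst locally Lipschitz) on $a$.

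Next I would invert the mass matrix to bring the system to normal form $\dot y(t) = F(t,y(t))$ with $y=(a,b)$, where $F$ is continuous in $t$ (using $g\in C^0([0,T];H)$ and $h\in W^{1,1}$, so $w$ is at least $L^1$ in time with values in $V'$; if needed one works with Carath\'eodory solutions) and locally Lipschitz in $y$: the nonlinearities $\text{Log}'_\varepsilon$, $\beta'_\varepsilon$, $\sigma'$ are all globally Lipschitz, the matrix inversion is locally Lipschitz, and everything is built from these by composition, multiplication and projection onto finite-dimensional spaces. The Cauchy--Lipschitz theorem then gives a unique maximal $C^1$ (or absolutely continuous, in the Carath\'eodory setting) local solution on some $[0,T_n)$ with the prescribed initial data $\vartheta_n(0)=\vartheta_{0,n}$, $\chi_n(0)=\chi_{0,n}$.

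Finally I would upgrade the local solution to a global one on all of $[0,T]$ by an a priori bound precluding blow-up. Testing (\ref{eq1n}) with $\vartheta_n$ and (\ref{eq2n}) with $\partial_t\chi_n$ (both admissible since $\vartheta_n(t)\in V_n$, $\partial_t\chi_n(t)\in W_n$), integrating in time and using the coercivity of $B$, the monotonicity of $\beta'_\varepsilon$ (so that $\int_\Omega \beta'_\varepsilon(\chi_n)\partial_t\chi_n = \frac{d}{dt}\int_\Omega\beta_\varepsilon(\chi_n)\ge 0$ up to sign bookkeeping), the Lipschitz bound $|\sigma'(r)|\le c_L|r|+|\sigma'(0)|$, and Young's and Gronwall's inequalities, one obtains a bound on $\|\vartheta_n\|_{L^2(0,T;V)}$, $\|\chi_n\|_{H^1(0,T;H)\cap L^\infty(0,T;V)}$ depending only on the data and on $\varepsilon$, $\mu$, $n$ — but crucially finite on any interval where the solution exists. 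Since $\text{Log}'_\varepsilon\ge\varepsilon>0$, this $H$-bound on $\vartheta_n$ transfers to a bound on $|a(t)|$, and similarly $|b(t)|$ is bounded; hence the maximal solution cannot escape to infinity, forcing $T_n=T$ and in fact $\vartheta_n\in C^1([0,T];V_n)$, $\chi_n\in C^1([0,T];W_n)$.

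The main obstacle is the handling of the mass matrix $M(a)$ in front of $\dot a$: one must verify both its invertibility (clean, via Lemma \ref{logprimo} and finite-dimensionality) and enough regularity of $a\mapsto M(a)^{-1}$ to apply an existence-uniqueness theorem for ODEs. Because $\text{Log}_\varepsilon = \varepsilon r + \log_\varepsilon r$ with $\log_\varepsilon$ only Lipschitz (its derivative $\log'_\varepsilon$ need not itself be Lipschitz), $F$ is merely continuous, not Lipschitz, in general; so strictly one should either exploit any additional smoothness of the particular Yosida approximation used, or prove uniqueness separately — e.g. by a Gronwall argument on the difference of two solutions using the one-sided Lipschitz (monotonicity) structure of $\text{Log}_\varepsilon$ and $\beta'_\varepsilon$ rather than full Lipschitz continuity of $F$ — and invoke Peano for existence. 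This is the only delicate point; the a priori estimates and the global continuation are routine.
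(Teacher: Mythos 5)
Your proposal is correct and follows the same overall strategy as the paper: reduce $(P_n)$ to an ODE system for the coefficient vectors, observe that the mass matrix multiplying $(\dot a,\dot b)$ is block--positive-definite (the $\chi$-block being $\mu$ times a Gram matrix, the $\vartheta$-block controlled from below via Lemma \ref{logprimo} and the equivalence of norms on the finite-dimensional $V_n$), obtain a unique local solution by a fixed-point argument, and extend globally. Two points of execution differ. First, where you invert the mass matrix directly --- the cleaner route, since the system is affine in the derivatives --- the paper keeps the system in the implicit form $\textbf{E}(t,\textbf{u},\textbf{y},\textbf{u}',\textbf{y}')=0$ and applies the Implicit Function Theorem, which forces it to construct by hand an initial point annihilating $\textbf{E}$ (the projected quantities $\chi^{\bullet*}_{0,n}$ and $u^{\bullet}_{0,n}$); your version avoids that detour. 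Second, for globality the paper iterates the local construction on successive intervals of fixed length $\tau$, implicitly relying on the contraction constant (hence the Lipschitz constant of the normal-form right-hand side) being uniform, whereas you derive an a priori energy bound by testing with $\vartheta_n$ and $\partial_t\chi_n$ to preclude blow-up; yours is the more robust argument, and it is in any case the estimate the paper performs immediately afterwards for the passage $n\to\infty$. Finally, the delicacy you flag --- that $\log'_\varepsilon$ might fail to be Lipschitz, leaving the right-hand side merely continuous --- does not in fact arise here: since $\log$ is a smooth strictly monotone function on $(0,+\infty)$, its resolvent $J_\varepsilon=(I+\varepsilon\log)^{-1}$ is smooth with $J_\varepsilon'=J_\varepsilon/(J_\varepsilon+\varepsilon)$, whence $\log'_\varepsilon=1/(J_\varepsilon+\varepsilon)$ and $|\log''_\varepsilon|\leq c/\varepsilon^2$ globally; so $\text{Log}'_\varepsilon$ is globally Lipschitz, the classical Cauchy--Lipschitz theorem applies directly, and neither Peano nor a separate monotonicity-based uniqueness argument is needed.
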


In order to prove this theorem, let $\{ e_j \}_{j=1}^{n}$ and $\{ b_{j} \}_{j=1}^{n}$ be two bases for $V_n$ and $W_{n}$ respectively. Since we can express the functions $\vartheta_n$ and $\chi_n$ as linear combinations of these bases, the true unknowns are the coefficients $u_j$ and $y_j$ of such representations. 
If $\textbf{u}$ and $\textbf{y}$ are the vectors which collect these coefficients, then the system (\ref{eq1n})-(\ref{eq2n}) may be rewritten in the form of a system of integro-differential ordinary equations
\begin{equation} 
\textbf{E}\left(t, \textbf{u}(t), \textbf{y}(t), \textbf{u}'(t), \textbf{y}'(t)  \right) = 0 \ \ \ t \in (0,T),\label{Eimplicit}
\end{equation}
where $\textbf{E} = (\textbf{F}, \textbf{G})$, with components $F_j$ and $G_j$ of $\textbf{F}$ and $\textbf{G}$ defined as follows
\begin{align}
F_j &= \int_{\Omega}{\text{Log}'_\varepsilon \left( \sum_i{u_i e_i}\right) \left( \sum_i{u'_i e_i}\right) e_j} 
+ \int_{\Omega}{\left( \sum_i{y'_i b_{i}} \right)e_j}  
+ \sum_i u_i \int_{\Omega}{\nabla e_i \cdot \nabla e_j} \nonumber \\
& + \int_{\Gamma}{\alpha \left( \sum_i{u_i e_i} \right) e_j} - \int_{\Omega}{w e_j} \label{compF}\\
G_j &= \mu \int_{\Omega}{\left( \sum_i{y'_i b_{i}} \right)b_{j}} 
+ \int_{\Omega}{\left( \sum_i{y_i \nabla b_{i}} \right)\cdot \nabla b_{j}}
+ \int_{\Omega}{\beta'_\varepsilon \left( \sum_i{y_i b_{i}} \right) b_{j}} \nonumber \\
&+ \int_{\Omega}{\sigma' \left( \sum_i{y_i b_{i}} \right) b_j} - \int_{\Omega}{\left( \sum_i{u_i e_i} \right) b_{j}} \label{compG} 
\end{align}
for $j=1, \cdots, n$; we can notice that the variables $(\textbf{u}', \textbf{y}')$ are independent, as well as $(\textbf{u}, \textbf{y})$. 

The first target is to put the system in a normal form. After that, we will apply the Implicit Function Theorem to prove the existence of a solution (at least local). 
We can notice that $\textbf{E}$ is a continuous function and has continuous derivatives with respect to $\textbf{u}'$ and $\textbf{y}'$. Next, we study its Jacobian matrix with respect to the above variables: we can think of it as a four-block matrix, namely the derivatives of $\textbf{F}$ and $\textbf{G}$ with respect to $\textbf{u}'$ and $\textbf{y}'$. Since $\textbf{G}$ does not depend on $\textbf{u}'$, the determinant of the Jacobian is
\begin{equation} 
\det \left[ \frac{\partial \textbf{E}}{\partial (\textbf{u}', \textbf{y}')} \right] = \det \left[ \frac{\partial \textbf{F}}{\partial \textbf{u}'} \right] \cdot \det \left[ \frac{\partial \textbf{G}}{\partial \textbf{y}'} \right].
\end{equation}
Moreover, all the partial derivatives involved are scalar products: indeed,
\begin{equation} 
\frac{\partial G_j}{\partial y'_i} = \mu \int_{\Omega}{b_{i} b_{j}} = \mu (b_{i}, b_{j})_H; \ \ \ 
\frac{\partial F_j}{\partial u'_i} = \int_{\Omega}{\text{Log}'_\varepsilon \left( \sum_k{u_k(t) e_k} \right) e_i e_j} =: (e_i, e_j)_{t,\textbf{u}};
\end{equation}
where $( \cdot , \cdot)_{t,\textbf{u}}$ is an equivalent scalar product in $H$, thanks to Lemma \ref{logprimo}. Hence, the above matrices are positive definite. 

Now, we find a point $(t_*, \textbf{u}_*, \textbf{y}_*, \textbf{u}'_*, \textbf{y}'_*)$ such that $E(t_*, \textbf{u}_*, \textbf{y}_*, \textbf{u}'_*, \textbf{y}'_*) = 0$. First of all, we set $t_* = 0$; then, we choose $\textbf{u}_*$ and $\textbf{y}_*$ equal to the vectors of coefficients of the initial data $\vartheta_{0,n} \in V_n$ $\chi_{0,n} \in W_n$ with respect to the chosen bases. 

We define $\chi^\bullet_{0,n}$ as the solution of the following equation
\begin{equation}
\mu \chi_{0,n}^{\bullet} - \Delta \chi_{0,n} + \beta'_\varepsilon (\chi_{0,n}) + \sigma'(\chi_{0,n}) = \vartheta_{0,n}.
\end{equation}
By comparison, we have $\chi^\bullet_{0,n} \in H$; then, we take its projection $\chi_{0,n}^{\bullet*}$ on $W_n$ with respect to the scalar product in $H$ and we set $\textbf{y}'_*$ equal to the coefficients of $\chi_{0,n}^{\bullet*}$ with respect to the basis $\{b_j \}$. 

Next, in order to find $\textbf{u}'_*$, we define 
\begin{equation}
w_n (t) = \sum_{i=1}^{n} w_i(t) e_i,
\end{equation} 
with $w_i (t) = (w(t),e_i)_H$; by construction, $w_n \in C^0([0,T];V_n)$ and $w_n \rightarrow w$ in $C^0([0,T];V')$, as $n \nearrow + \infty$. Then, we define $u_{0,n}^{\bullet}$ as the solution of the following equation
\begin{equation} 
\text{Log}'_\varepsilon(\vartheta_{0,n})u_{0,n}^{\bullet} + \chi_{0,n}^{\bullet *} + B\vartheta_{0,n} = w_n(0). 
\end{equation}
By comparison, we have $u_{0,n}^{\bullet} \in H$; then, as before, we take its projection on $V_n$ with respect to the scalar product $(\cdot, \cdot)_{t_*,\textbf{u}_*}$ and we set $\textbf{u}'_*$ equal to the coefficients of the projection with respect to the base $\{ e_j \}$. 

Therefore, we have $\textbf{E}\left(t_*, \textbf{u}_*, \textbf{y}_*, \textbf{u}'_*, \textbf{y}'_*  \right) = 0$ and we can apply the Implicit Function Theorem to conclude that the system is (locally) equivalent to a system of the form
\begin{equation} 
\left( \textbf{u}'(t), \textbf{y}'(t) \right) = \mathscr{E}(t, \textbf{u}(t), \textbf{y}(t)), \label{ptofix}
\end{equation}
where $\mathscr{E}$ is a Lipschitz continuous function with respect to the variables $\textbf{u}$ and $\textbf{y}$. 

At this point, we can integrate the expression above and we get
\begin{equation} 
\left( \textbf{u}(t), \textbf{y}(t) \right) = ( \textbf{u}_{0,n}, \textbf{y}_{0,n}) + \int_0^{t}{\mathscr{E}(s, \textbf{u}(s), \textbf{y}(s)) ds} =: F(t, \textbf{u}(t), \textbf{y}(t)) 
\end{equation}
where $F(t, \textbf{u}(t), \textbf{y}(t))$ turns out to be a contraction operator in the space $C^0([0,\tau]; \textbf{R}^{2n})$ for $0< \tau < T$ small enough. Using the Contraction Theorem, we claim that there exists a unique local solution of the discrete problem. 

Moreover, the solution is indeed a global solution defined on the whole interval $[0,T]$. This result can be proved noting that solution $(\vartheta_n, \chi_n)$ are continuous with respect to the time variable; thus, we can consider a new Cauchy problem with initial data $(\vartheta_n (\tau), \chi_n (\tau)) \in V_n \times W_n$ and apply the same method as above to find a new pair of solutions with the same regularity of the previous pair which extends it to the interval $(\tau, 2\tau)$. 

In general, repeating iteratively this procedure, we can find a global solution 
\begin{equation}
\vartheta_n \in C^1([0,T]; V_n) \ \ \ \text{and} \ \ \ \chi_n \in C^1([0,T]; W_{n}).
\end{equation}

\begin{remark}
We could have weakened the hypotheses on the source data and considered an inner heat supply with the following regularity $g \in L^2(0,T;H)$, while the regularity of the $h$-term remains the same.

In this case, in order to prove the theorem regarding the discrete problem, we should have approximated the source operator $w \in L^2(0,T;V')$ with a sequence of functions $\{w_k \} \subseteq C^0([0,T];V')$. 

Furthermore, we identify a new sequence of functions $\{w_n \} \subseteq C^0([0,T];V_n)$ defined as
\begin{equation}
w_n(t) = \sum_{i=1}^{n} (w_k(t), e_i)_H \, e_i,
\end{equation}
so that we can find the point $(t_*, \textbf{u}_*, \textbf{y}_*, \textbf{u}'_*, \textbf{y}'_*)$ such that $\textbf{E}(t_*, \textbf{u}_*, \textbf{y}_*, \textbf{u}'_*, \textbf{y}'_*) = 0$. 

For the sake of simplicity, we preferred to introduce a stronger assumption on $w$. 
\end{remark}

\paragraph{Uniform estimates with respect to $n$.}
In order to prove the existence of a solution to the problem $(P_\varepsilon)$, $\varepsilon >0$ fixed, we perform some a priori estimates and then we let $n$ tend to infinity. 

In this section and in the following ones, we assume the arrangement that the symbol $c$ denotes a constant which depends only on the given data; the exact value of the constant $c$ may vary in the different estimates and even in the same chain of inequalities. A notation like $c_\varepsilon$ stands for a constant which depends not only on the data, but also on the parameter $\varepsilon$; nevertheless, it does not depend on the index $n$. We will also use constants like $c_\delta$, which denotes a dependence on a positive parameter $\delta$, but still independent of $n$.

The basic estimate is obtained as follows: we test the first equation (\ref{eq1n}) by $\vartheta_n(t)$ and the second equation (\ref{eq2n}) by $\partial_t \chi_n(t)$, we integrate over $(0,t)$, $t \in (0,T]$, and we add the equations to each other
\begin{gather}
\int_{Q_t}{\partial_t(\text{Log}_\varepsilon \vartheta_n) \vartheta_n} + 
\int_{Q_t}{\left| \nabla \vartheta_n \right|^2} + 
\int_{\Sigma_t}{\alpha \left|\vartheta_n\right|^2} +
\mu \int_{Q_t}{\left| \partial_t\chi_n \right|^2} + 
\frac{1}{2} \int_{\Omega}{\left| \nabla \chi_n (t) \right|^2}  \nonumber \\
+ \int_{\Omega}{\beta_\varepsilon(\chi_n(t))} 
= \int_{Q_t}{w \vartheta_n} 
- \int_{Q_t}{\sigma'(\chi_n)\partial_t \chi_n} 
+ \int_{\Omega}{\beta_\varepsilon(\chi_{0,n})}
+ \frac{1}{2} \int_{\Omega}{\left| \nabla \chi_{0,n} \right|^2}.
\label{eqstima1n}
\end{gather}

For the sake of clearness, we deal with each term separately. First of all, we notice that there are some boundary terms (due to the Robin condition that we imposed for $\vartheta$) that have to be properly estimated: to do so, we will make use of the Trace Theorem. 

Let's start with the first term, which can be rewritten as
\begin{equation}
\int_{Q_t}{\partial_t(\text{Log}_\varepsilon \vartheta_n) \vartheta_n} = \int_{\Omega}{I_\varepsilon ( \vartheta_n (t))} - \int_{\Omega}{I_\varepsilon ( \vartheta_{0,n})};
\end{equation}
moreover, the last term is moved on the right hand side of the equality (\ref{eqstima1n}) and it is uniformly bounded, thanks to Lemma \ref{iepsilon} and (\ref{chi0ntheta0n}).

Thanks to (\ref{alpha}), the second and third terms are estimated from below as follows
\begin{equation}
\int_{Q_t}{\left| \nabla \vartheta_n \right|^2} + \int_{\Sigma_t}{\alpha \left|\vartheta_n\right|^2} \geq c \left\| \vartheta_n  \right\|^2_{L^2(0,t; V)}.
\end{equation}  

The source term on the right hand side of (\ref{eqstima1n}) can be easily handled, using H$\ddot{\text{o}}$lder and Young Inequalities together with the Trace Theorem: 
\begin{equation}
 \int_{Q_t}{g \vartheta_n} + \int_{\Sigma_t}{h \vartheta_n} \leq \delta  \left\| \vartheta_n \right\|^2_{L^2(0,t; V)} + c_\delta \left[\left\| g\right\|^2_{L^2(Q_t)} +  \left\| h\right\|^2_{L^2(\Sigma_t)} \right].
\end{equation}
Moreover, we estimate the $\beta_\varepsilon$-term in the following way
\begin{gather}
\int_{\Omega}{ \beta_\varepsilon(\chi_{0,n})} + \frac{1}{2} \int_{\Omega}{\left| \nabla \chi_{0,n} \right|^2}
= \int_{\Omega}{\left[ \int_0^{\chi_{0,n}}{\beta'_\varepsilon (s) ds} \right]} + \frac{1}{2} \int_{\Omega}{\left| \nabla \chi_{0,n} \right|^2} \nonumber \\
\leq \frac{c}{\varepsilon} \int_{\Omega}{\left|\chi_{0,n} \right|^2} + \frac{1}{2} \int_{\Omega}{\left| \nabla \chi_{0,n} \right|^2} \leq c_\varepsilon,
\end{gather}
thanks to the Lipschitz property of $\beta'_\varepsilon$ and (\ref{chi0ntheta0n}). 

To estimate the $\sigma'$-term, we have to add on both sides of the equation (\ref{eqstima1n}) the term $1/2 \left\| \chi_n(t) \right\|^2_H$: on the left hand side a $V$-norm of $\chi_n(t)$ appears, while on the right hand side we perform a chain of inequalities, using (\ref{stimasigma}), (\ref{chi0ntheta0n}) and Young Inequality, 
\begin{gather}
\frac{1}{2} \left\| \chi_n(t) \right\|^2_H - \int_{Q_t}{\sigma'(\chi_n)\partial_t \chi_n} = \int_{Q_t}{\chi_n \left( \partial_t \chi_n \right)} + \frac{1}{2} \left\| \chi_{0,n} \right\|^2_H - \int_{Q_t}{\sigma'(\chi_n)\partial_t \chi_n} \nonumber \\
\leq \frac{1}{2} \left\| \chi_{0,n} \right\|^2_H + \int_0^t{\int_{\Omega}{\chi_n \left( \partial_t \chi_n \right)}} + c_\sigma \int_0^t{\int_{\Omega}{\left( 1 + \left| \chi_n \right| \right) \partial_t \chi_n}} \nonumber \\
\leq c \left\| \chi_{0,n} \right\|^2_H + \delta \left\| \partial_t \chi_n \right\|^2_{L^2(Q_t)} + c_\delta \int_0^t{\left\| \chi_n(s) \right\|^2_H ds} \nonumber \\
\leq c \left\| \chi_{0,n} \right\|^2_H + \delta \left\| \partial_t \chi_n \right\|^2_{L^2(Q_t)} + c_\delta \int_0^t{\left\| \chi_n(s) \right\|^2_V ds}, 
\end{gather}
for each $\delta >0$.

Finally, collecting all the previous results and choosing $\delta >0$ small enough, we can apply the Gronwall Lemma (see Reference \cite[Appendix, Lemma A.4]{brezis}) and get
\begin{gather}
\left\| I_\varepsilon(\vartheta_n) \right\|_{L^{\infty}(0,T; L^1(\Omega))} + \left\| \vartheta_n  \right\|^2_{L^2(0,T; V)} + \left\| \chi _n \right\|^2_{H^1(0,T;H)} + \left\| \chi _n \right\|^2_{L^{\infty}(0,T;V)} \nonumber \\ 
+ \left\| \beta_\varepsilon (\chi_n) \right\|_{L^{\infty}(0,T; L^1(\Omega))} \leq c_\varepsilon.
\label{stima1n}
\end{gather}
after having taken the supremum over $t \in (0,T)$. 

Next, we derive an \textit{a priori} bound for $\partial_t \vartheta_n \in L^2(0,T;H)$; we test (\ref{eq1n}) by $\partial_t \vartheta_n (t)$ and integrate over $(0,t)$: 
\begin{gather}
\int_{Q_t}{\text{Log}'_\varepsilon (\vartheta_n) \left| \partial_t \vartheta_n \right|^2} + \int_{Q_t}{(\partial_t \chi_n)(\partial_t \vartheta_n)} + \frac{1}{2} \left\|\nabla \vartheta_n (t) \right\|^2_H + \int_{\Sigma_t}{\alpha \vartheta_n (\partial_t \vartheta_n)}  \nonumber \\ 
=\int_{Q_t}{g_n \partial_t \vartheta_n} + \int_{\Sigma_t}{h_n \partial_t \vartheta_n} + \frac{1}{2} \left\|\nabla \vartheta_{0,n} \right\|^2_H.
\label{eqstima2n}
\end{gather}

Thanks to Lemma \ref{logprimo}, the first term is easily estimated
\begin{equation}
\int_{Q_t}{\text{Log}'_\varepsilon (\vartheta_n) \left| \partial_t \vartheta_n \right|^2} \geq \varepsilon \left\| \partial_t \vartheta_n \right\|^2_{L^2(0,t;H)}.
\end{equation}
Concerning the second term, we move it the right hand side and we get
\begin{equation}
\left| \int_{Q_t}{\partial_t \chi_n \, \partial_t \vartheta_n} \right|  \leq \delta \left\| \partial_t \vartheta_n \right\|^2_{L^2(0,t;H)} + c_\delta \left\| \partial_t \chi_n \right\|^2_{L^2(0,T;H)},
\end{equation}
where we applied H$\ddot{\text{o}}$lder and Young Inequalities; moreover, the norm of $\partial_t \chi_n $ is uniformly bounded, thanks to the previous estimate (\ref{stima1n}).

Due to (\ref{alpha}), (\ref{chi0ntheta0n}) and the Trace Theorem, the boundary term is treated as follows
\begin{equation}
\int_{\Sigma_t}{\alpha \vartheta_n (\partial_t \vartheta_n) } \geq \frac{\overline{\alpha}}{2} \left\| \vartheta_n (t) \right\|^2_{L^2(\Gamma)} - c \left\| \vartheta_{0,n} \right\|^2_{V} \geq \frac{\overline{\alpha}}{2} \left\| \vartheta_n (t) \right\|^2_{L^2(\Gamma)} - c.
\end{equation}

Finally, after having noticed that $\left\|\nabla \vartheta_{0,n} \right\|^2_H$ is uniformly bounded (see (\ref{chi0ntheta0n})), we estimate the source terms:
\begin{eqnarray}
\int_{Q_t}{g \partial_t \vartheta_n} &\leq& \delta \left\| \partial_t \vartheta_n \right\|^2_{L^2(0,t;H)} + c_\delta \left\| g \right\|^2_{L^2(0,T;H)};  \nonumber \\
\int_{\Sigma_t}{h \partial_t \vartheta_n} &=& \int_{\Gamma}{h(t) \vartheta_n(t)} - \int_{\Gamma}{h(0) \vartheta_{0,n}} - \int_{\Sigma_t}{(\partial_t h) \vartheta_n}  \nonumber \\
&\leq& \delta \left\| \vartheta_n(t)\right\|^2_{L^2(\Gamma)} + c \left\| h \right\|^2_{L^{\infty}(0,T;L^2(\Gamma))} + \left\| \vartheta_{0,n} \right\|^2_{V}  \nonumber \\
&& + \int_0^t{\left\| \partial_t h (s) \right\|_{L^2(\Gamma)} \left\| \vartheta_n (s) \right\|_{L^2(\Gamma)} ds};
\end{eqnarray}
we recall that $h \in L^2(0,T;L^2(\Gamma)) \cap W^{1,1}(0,T;L^2(\Gamma))$.

Collecting all the estimates above, we can choose $\delta >0$ small enough and apply the Gronwall Lemma (see Reference \cite[Appendix, Lemma A.5]{brezis}), in order to get
\begin{equation}
\left\| \vartheta_n \right\|^2_{L^{\infty}(0,T;V)} + \left\| \partial_t \vartheta_n \right\|^2_{L^2(0,T;H)} \leq c_\varepsilon.
\end{equation}

\paragraph{Passage to the limit as $n \nearrow + \infty$.}
Using the previous estimates, we can claim that there exist 
\begin{equation}
\vartheta_\varepsilon \in H^1(0,T; H) \cap L^{\infty}(0,T;V) \ \ \ \text{and} \ \ \ \chi_\varepsilon \in H^1(0,T; H) \cap L^{\infty}(0,T;V)
\end{equation}
such that $\vartheta_n \rightharpoonup \vartheta_\varepsilon$ and $ \chi_n \rightharpoonup \chi_\varepsilon$ weakly in $H^1(0,T; H) \cap L^{\infty}(0,T;V)$, as $n \nearrow + \infty$ (at least for a subsequence).

These weak convergences are already sufficient to ensure that the Cauchy conditions hold in the limit: $\vartheta_\varepsilon (0) = \vartheta_{0,\varepsilon}$ and $\chi_\varepsilon (0) = \chi_{0,\mu}$ in $H$ and a.e. in $\Omega$. Moreover, applying Aubin Lemma (see Reference \cite[Appendix, Lemma A.5]{brezis}), we have 
\begin{equation}
\vartheta_n \rightarrow \vartheta_\varepsilon \ \ \ \text{and} \ \ \ \chi_n \rightarrow \chi_\varepsilon \ \ \ \text{in} \ L^2(0,T;H).
\end{equation}
Strong convergence implies pointwise convergence (a.e.) in $Q$; therefore, we can identify the limits of the non-linear terms, taking into account the Lipschitz property of $\beta'_\varepsilon$, $\text{Log}_\varepsilon$ and $\sigma'$. 

At this point, we can show that the pair $(\vartheta_\varepsilon, \chi_\varepsilon)$ is indeed a solution to problem $(P_\varepsilon)$, $\varepsilon >0$. Fix $n \in \textbf{N}$ and test equations (\ref{eq1n}) and (\ref{eq2n}) by two arbitrary functions $v \in L^2(0,T;V_m)$ and $u \in L^2(0,T;W_{m})$ respectively, with $m\leq n$; then, we integrate over $(0,T)$. Now, we let $n$ tend to infinity and, using the proved convergences, we get
\begin{equation}
\begin{split}
&\int_{Q}{\partial_t(\text{Log}_\varepsilon \vartheta_\varepsilon + \chi_\varepsilon) v} + \int_{Q}{\nabla \vartheta_\varepsilon \cdot \nabla v} + \int_{\Sigma}{\alpha \vartheta_\varepsilon v} = \int_{Q}{g v} + \int_{\Sigma}{h v}  \\
&\mu \int_{Q_t}{ (\partial_t\chi_\varepsilon) u} + \int_Q{ \nabla \chi_\varepsilon \cdot \nabla u} + \int_Q{\beta'_\varepsilon(\chi_\varepsilon) u} + \int_{Q}{\sigma'(\chi_\varepsilon) u} = \int_{Q}{\vartheta_\varepsilon u} \label{2eqvareps}
\end{split}
\end{equation}
$\forall \, v \in L^2(0,T;V_m)$ and $\forall \, u \in L^2(0,T;W_{m})$, $\forall \, m \in \textbf{N}$. Since $m$ is arbitrary, the same variational equations hold for each $v \in L^2(0,T;V)$, by density.

Moreover, by comparison in the second equation of (\ref{2eqvareps}), we have $- \Delta \chi_\varepsilon \in L^2(0,T;H)$, then, due to Elliptic Regularity Theorem (see \cite[Chap. 1.5]{traccia}), we can conclude
\begin{equation}
\chi_\varepsilon \in L^2(0,T;D(A;H)).
\end{equation}

\paragraph{Uniqueness of solution to the problem $(P_\varepsilon)$.}
In order to prove the continuous dependence of solutions from data (hence, uniqueness), we follow the same method illustrated in \cite[Sec. 5]{global}.

Let consider the time-integrated version of (\ref{probleps1}), namely
\begin{equation}
\text{Log}_{\varepsilon} \vartheta_{\varepsilon} + \chi_{\varepsilon} + 1 \ast B\vartheta_{\varepsilon} = 1 \ast w + \eta_{0, \varepsilon} \ \ \ \text{where} \ \eta_{0,\varepsilon} := \text{Log}_\varepsilon \vartheta_{0,\varepsilon} + \chi_{0,\mu}, \label{probleps1int}
\end{equation}
and we couple it with the second equation (\ref{probleps2}) of $(P_\varepsilon)$. We pick two solutions $(\vartheta_{\varepsilon,i}, \chi_{\varepsilon,i})$ to the system corresponding to the sets of data $(w_i, \eta_{0,\varepsilon,i})$, $i=1,2$. Then we write both (\ref{probleps1int}) and (\ref{probleps2}) for such solutions and multiply the difference of the first equations by $\vartheta_\varepsilon := \vartheta_{\varepsilon,1} - \vartheta_{\varepsilon,2}$ and the difference of the second ones by $\chi_\varepsilon := \chi_{\varepsilon,1} - \chi_{\varepsilon,2}$. Finally, we sum the obtained equalities to each other and integrate over $Q_t := \Omega \times (0,t)$.

After some manipulations, we have 
\begin{gather}
\int_{Q_t}{[\text{Log}_\varepsilon \vartheta_{\varepsilon,1} - \text{Log}_\varepsilon \vartheta_{\varepsilon,2}] \vartheta_\varepsilon} 
+ \frac{1}{2} \left\| 1 \ast \nabla \vartheta_\varepsilon (t) \right\|^2_H
+ \int_{\Sigma_t}{(1 \ast \alpha \vartheta_\varepsilon) \vartheta_\varepsilon} 
+ \frac{\mu}{2} \left\| \chi_\varepsilon(t) \right\|^2_H \nonumber \\ 
+ \int_{Q_t}{\left|\nabla \chi_\varepsilon \right|^2} 
+ \int_{Q_t}{\left[ \beta'_\varepsilon(\chi_{\varepsilon,1}) - \beta'_\varepsilon(\chi_{\varepsilon,2}) \right] \chi_\varepsilon} 
= \int_{Q_t}{(1 \ast w + \eta_{0,\varepsilon}) \vartheta_\varepsilon} \nonumber \\
- \int_{Q_t}{[\sigma'(\chi_{\varepsilon,1}) - \sigma'(\chi_{\varepsilon,2})] \chi_\varepsilon}
+\frac{\mu}{2} \left\| \chi_{0,\mu} \right\|^2_H, \label{unicitaeps}
\end{gather}
where we introduced a similar notation for all the differences involved: $w := w_1 - w_2$, $\chi_{0,\mu}:= \chi_{0,\mu 1} - \chi_{0,\mu 2}$.

We deal with each term separately. Concerning the non linear terms, we have that the $\text{Log}_\varepsilon$-term and the $\beta'_\varepsilon$-term are non-negative, since $\text{Log}_\varepsilon$ and $\beta'_\varepsilon$ are monotone, while the $\sigma'$-term is estimated as follow (see (\ref{betasigma}))
\begin{equation}
\int_{Q_t}{[\sigma'(\chi_{\varepsilon,1}) - \sigma'(\chi_{\varepsilon,2})] \chi_\varepsilon} \leq c_L \int_{Q_t}{\left| \chi_\varepsilon \right|^2} = c_L \int_0^t{\left\| \chi_\varepsilon(s) \right\|^2_H ds}.
\end{equation}

The boundary term can be easily handled
\begin{equation}
\int_{\Sigma_t}{(1 \ast \alpha \vartheta_\varepsilon) \vartheta_\varepsilon} \geq \frac{\overline{\alpha}}{2} \left\| 1 \ast \vartheta_\varepsilon (t) \right\|^2_{L^2(\Gamma)}.
\end{equation}

Regarding the first term on the right hand side of the equality, we have
\begin{gather}
\int_{Q_t}{(1 \ast w + \eta_{0,\varepsilon}) \vartheta_\varepsilon} = \int_{\Omega}{(1 \ast g(t) + \eta_{0,_\varepsilon}) (1 \ast \vartheta_\varepsilon(t))} + \int_{\Gamma}{(1 \ast h(t)) (1 \ast \vartheta_\varepsilon(t))} \nonumber \\
- \int_{Q_t}{g (1 \ast \vartheta_\varepsilon)} - \int_{\Sigma_t}{h (1 \ast \vartheta_\varepsilon)} 
\leq c_\delta \left( \left\|1 \ast g (t) \right\|^2_H + \left\| \eta_{0,\varepsilon} \right\|^2_H + \left\|1 \ast h(t) \right\|^2_{L^2(\Gamma)}  \right) \nonumber \\
+ \delta \left\| 1 \ast \vartheta_\varepsilon (t) \right\|^2_V 
+ \int_0^t{ \left( \left\| g(s) \right\|_H + \left\| h(s) \right\|_{L^2(\Gamma)} \right) \left\| 1 \ast \vartheta_\varepsilon (s) \right\|_V}, 
\end{gather}
for all $\delta >0$, thanks to Young and H$\ddot{\text{o}}$lder inequalities and Trace Theorem; we recall that $g \in L^2(0,T;H)$ and $h \in L^2(0,T;L^{2}(\Gamma))$.  Moreover, using Young Theorem (see Reference \cite[Chap. 8, Prop. 8.9]{folland}) with $r = \infty$ and $p=q=2$, we have
\begin{gather}
\left\|1 \ast g (t) \right\|^2_H + \left\| \eta_{0,\varepsilon} \right\|^2_H + \left\|1 \ast h(t) \right\|^2_{L^2(\Gamma)}  
\leq \left\|1 \ast g \right\|^2_{L^{\infty}(0,T;H)} + \left\| \eta_{0,\varepsilon} \right\|^2_H  
 \nonumber \\
+ \left\|1 \ast h \right\|^2_{L^{\infty}(0,T;L^2(\Gamma))}
\leq c \left[ \left\| g \right\|^2_{L^2(0,T;H)} + \left\| \eta_{0,\varepsilon} \right\|^2_H + \left\|h \right\|^2_{L^2(0,T;L^2(\Gamma))}\right].
\end{gather}

Now, we choose $\delta >0$ small enough, so that we can apply Gronwall Lemma (see Reference \cite[Appendix, Lemma A.5]{brezis})
\begin{gather}
\int_{Q_t}{[\text{Log}_\varepsilon \vartheta_{\varepsilon,1} - \text{Log}_\varepsilon \vartheta_{\varepsilon,2}] \vartheta_\varepsilon} + \int_{Q_t}{ \left[ \beta'_\varepsilon(\chi_{\varepsilon,1}) - \beta'_\varepsilon(\chi_{\varepsilon,2}) \right] \chi_\varepsilon} 
+ \left\| \nabla \chi_\varepsilon \right\|^2_{L^2(Q_t)} \nonumber \\
+ \mu \left\| \chi_\varepsilon(t) \right\|^2_H  
+ \left\| 1 \ast \vartheta_\varepsilon (t) \right\|^2_V 
\leq c \left[ \left\| \eta_{0,\varepsilon} \right\|^2_H + \left\|g\right\|^2_{L^2(0,T;H)} + \left\|h\right\|^2_{L^2(0,T;L^2(\Gamma))} \right],
\end{gather}
$\forall \, t \in (0,T)$. This concludes the proof of Theorem \ref{teoeps}.

\subsubsection{Convergence of problems $(P_\varepsilon)$ to the original problem $(P_\mu)$ as $\varepsilon \searrow 0$}

Theorem \ref{teoeps} ensures that for $\varepsilon >0$ fixed there exists a solution to the approximated problem $(P_\varepsilon)$ and it is unique. Now, we want to get a solution to the original problem $(P_\mu)$ letting $\varepsilon$ tend to zero. We consider an arbitrary solution $(\vartheta_\varepsilon, \chi_\varepsilon)$ of $(P_\varepsilon)$ and we will perform a number of \textit{a priori} estimates so that we can take the limit as $\varepsilon \searrow 0$. In general, these estimates will hold for $\varepsilon$ small enough; however, they are independent on the parameter $\varepsilon$.

\paragraph{First a priori estimate.}
The first estimate is analogous to the first estimate performed in the calculations above. As before, we test the first equation (\ref{probleps1}) by $\vartheta_\varepsilon (t)$ and the second one (\ref{probleps2}) by $\partial _t \chi_\varepsilon (t)$; then, we integrate over the time interval $(0,t)$ and we sum the obtained equalities to each other. 

We can notice that all the terms are bounded by a constant $c$, independent of $\varepsilon$, except for the $\beta_\varepsilon$-term, which is estimated as follow
\begin{equation}
\int_{\Omega}{\beta_\varepsilon(\chi_{0,\mu})} \leq \int_{\Omega}{\beta(\chi_{0,\mu})} \leq c,
\end{equation}
due to the properties of Yosida regularization and to the assumption (\ref{chizerothetabound}).

Then, we have
\begin{gather}
\left\| I_\varepsilon(\vartheta_\varepsilon) \right\|_{L^{\infty}(0,T; L^1(\Omega))} + \left\| \vartheta_\varepsilon  \right\|^2_{L^2(0,T; V)} + \left\| \chi _\varepsilon \right\|^2_{H^1(0,T;H)} + \left\| \chi _\varepsilon \right\|^2_{L^{\infty}(0,T;V)} \nonumber \\
+ \left\| \beta_\varepsilon(\chi_\varepsilon) \right\|_{L^{\infty}(0,T; L^1(\Omega))} \leq c. 
\label{stima1eps}
\end{gather}
and, by comparison, $\left\| \partial_t \text{Log}_\varepsilon (\vartheta_\varepsilon) \right\|_{L^2(0,T; V')} \leq c$.

\paragraph{Second a priori estimate.}
The following estimate provides us a bound for the non linear term $\beta'_\varepsilon$. We test the equation (\ref{probleps2}) by $\beta'_\varepsilon (\chi_\varepsilon)$ and we integrate over $(0,t)$, $t \in (0,T]$
\begin{gather}
\mu \int_{\Omega}{\beta_\varepsilon(\chi_\varepsilon(t))} + \int_{Q_t}{\nabla \chi_\varepsilon \cdot \nabla \left[ \beta'_\varepsilon(\chi_\varepsilon) \right]} + \int_{Q_t}{\left| \beta'_\varepsilon(\chi_\varepsilon) \right|^2} 
= \int_{Q_t}{\vartheta_\varepsilon \beta'_\varepsilon(\chi_\varepsilon)} \nonumber \\
- \int_{Q_t}{\sigma'(\chi_\varepsilon) \beta'_\varepsilon(\chi_\varepsilon)} + \mu \int_{\Omega}{\beta_\varepsilon(\chi_{0,\mu})}. \label{eqstima2eps}
\end{gather}

The second term on the left hand side is non-negative, since $\beta'_\varepsilon$ is monotone; the right hand side is estimated in the following way
\begin{gather}
\int_{Q_t}{\vartheta_\varepsilon \beta'_\varepsilon(\chi_\varepsilon)} - \int_{Q_t}{\sigma'(\chi_\varepsilon) \beta'_\varepsilon(\chi_\varepsilon)} + \mu \int_{\Omega}{\beta_\varepsilon(\chi_0)} \nonumber \\
\leq \delta \left\| \beta'_\varepsilon(\chi_\varepsilon) \right\|^2_{L^2(0,t;H)}+ c_\delta \left( \left\| \vartheta_\varepsilon \right\|^2_{L^2(0,T;H)} +  \left\| \chi_\varepsilon \right\|^2_{L^2(0,T;H)} \right) + c
\end{gather}
for all $\delta >0$, due to H$\ddot{\text{o}}$lder and Young Inequalities, (\ref{betasigma}), (\ref{chizerothetabound}) and $0 \leq \beta_\varepsilon(r) \leq \beta(r)$,  $\forall \, r \in \textbf{R}$. We recall that the norms of $\vartheta_\varepsilon$ and $\chi_\varepsilon$ are uniformly bounded, thanks to (\ref{stima1eps}). 

Then, we choose $\delta >0 $ small enough and we take the supremum over $t \in (0,T)$:
\begin{equation}
\left\| \beta'_\varepsilon(\chi_\varepsilon) \right\|^2_{L^2(0,T;H)} \leq c. \label{stima2eps}
\end{equation}
Collecting all the estimates founded so far, we have the estimate
\begin{equation}
\left\| \chi_\varepsilon \right\|_{L^2(0,T;D(A;H))} \leq c.
\label{chiregular}
\end{equation}

\paragraph{Third a priori estimate.}
The last estimate we need is a bound for the logarithmic term in a suitable functional space. We recall that, by comparison in the first equation (\ref{probleps1}), we already know that $\text{Log}_\varepsilon(\vartheta_\varepsilon) \in H^1(0,T;V')$, with uniformly bounded norm (with respect to $\varepsilon$) thanks to (\ref{stima1eps}). 

In order to obtain this estimate, we have to be very careful in dealing with the boundary terms, so that we can obtain a suitable bound for the logarithm.

\begin{remark}
We recall that, since $\text{Log}_\varepsilon (r) \nearrow \log(r)$, $\forall \, r >0$, as $\varepsilon \rightarrow 0^+$ (see Reference \cite[Chap. II]{brezis}), then
\begin{equation}
\text{Log}_\varepsilon (r) \leq r, \ \ \forall \, r >1 \ \ \ \text{and} \ \ \ \text{Log}_\varepsilon (r) \leq 0, \ \ \forall \, r \leq 1.
\end{equation}
\label{osslog}
\end{remark}

We test (\ref{probleps1}) by $\text{Log}_\varepsilon(\vartheta_\varepsilon)$ and we integrate over $(0,t)$
\begin{gather}
\frac{1}{2} \left\| \text{Log}_\varepsilon \vartheta_\varepsilon (t) \right\|^2_H 
+ \int_{Q_t}{\nabla \vartheta_\varepsilon \cdot \nabla \left(\text{Log}_\varepsilon \vartheta_\varepsilon \right)} 
+ \int_{\Sigma_t}{\alpha \vartheta_\varepsilon \left( \text{Log}_\varepsilon \vartheta_\varepsilon \right)} \nonumber \\
= \int_{Q_t}{g \left( \text{Log}_\varepsilon \vartheta_\varepsilon \right)} 
+ \int_{\Sigma_t}{h \left(\text{Log}_\varepsilon \vartheta_\varepsilon \right)} 
- \int_{Q_t}{\left( \partial_t \chi_\varepsilon \right) \text{Log}_\varepsilon \vartheta_\varepsilon} 
+ \frac{1}{2} \left\| \text{Log}_\varepsilon \vartheta_{0,\varepsilon} \right\|^2_H.
\label{eqstima3eps}
\end{gather}

As $\text{Log}_\varepsilon$ is a monotone function, the second term is non negative; the boundary term needs a deeper analysis: keeping into account Remark \ref{osslog}, we have 
\begin{gather}
\int_{\Sigma_t}{\alpha \vartheta_\varepsilon \, \text{Log}_\varepsilon \vartheta_\varepsilon} 
\geq \overline{\alpha} \int_0^t{\left[ \int_{\Gamma \cap \left\{ \vartheta_\varepsilon \in (-\infty, 0) \cup (1, + \infty) \right\}}{ \vartheta_\varepsilon \, \text{Log}_\varepsilon \vartheta_\varepsilon} + \int_{\Gamma \cap \left\{ \vartheta_\varepsilon \in [0,1] \right\}}{\vartheta_\varepsilon \, \text{Log}_\varepsilon \vartheta_\varepsilon} \right]}  \nonumber \\
\geq \overline{\alpha} \int_0^t{\left[\int_{\Gamma \cap \left\{ \vartheta_\varepsilon \in [0,1] \right\}}{\vartheta_\varepsilon \text{Log}_\varepsilon \vartheta_\varepsilon} \right]} \geq -c,
\end{gather}
where the last inequality follows from a suitable application of Lebesgue Convergence Theorem.

Concerning the right hand side of the equality, the source terms are treated in the following way: the $g$-term can be easily handled together with the $\partial _t \chi_\varepsilon$-term
\begin{equation}
\int_{Q_t}{g \left( \text{Log}_\varepsilon \vartheta_\varepsilon \right)} - \int_{Q_t}{\left( \partial_t \chi_\varepsilon \right) \text{Log}_\varepsilon \vartheta_\varepsilon}
\leq \int_0^t{ \left( \left\| g(s) \right\|_H + \left\| \partial_t \chi_\varepsilon (s) \right\|_H  \right)  \left\| \text{Log}_\varepsilon \vartheta_\varepsilon (s)  \right\|_H};
\end{equation}
we recall that $g \in L^2(0,T;H)$ and $\partial_t \chi_\varepsilon \in L^2(0,T;H)$, with uniformly bounded norm, thanks to (\ref{stima1eps}). On the other hand, the $h$-term requires some additional calculus
\begin{gather}
\int_{\Sigma_t}{h \left(\text{Log}_\varepsilon \vartheta_\varepsilon\right)} \leq \int_0^t{\int_{\Gamma \cap \left\{ \vartheta_\varepsilon > 1 \right\}}{ h \vartheta_\varepsilon }} \leq \left\| \vartheta_\varepsilon \right\|^2_{L^2(0,T;L^2(\Gamma))} + \left\| h \right\|^2_{L^2(0,T;L^2(\Gamma))} \leq c,
\end{gather} 
recalling that $h \geq 0$ a.e. on $\Sigma$ and using Trace Theorem and (\ref{stima1eps}).

Regarding the last term, provided that $\varepsilon$ is small enough (say $\varepsilon \in (0,1)$), we have the following bound
\begin{equation}
\left\| \text{Log}_\varepsilon \vartheta_{0,\varepsilon} \right\|_{L^{\infty}(\Omega)} \leq c \left\| \vartheta_{0,\varepsilon} \right\|_{L^{\infty}(\Omega)} \leq c.
\label{logepsbound}
\end{equation}
using properties stated in Remark \ref{osslog} and thanks to (\ref{theta0eps}).

At this point, we can apply Gronwall Lemma (see Reference \cite[Appendix, Lemma A.5]{brezis}) and take the supremum over $t \in (0,T)$
\begin{equation}
\left\| \text{Log}_\varepsilon \vartheta_\varepsilon \right\|_{L^{\infty}(0,T;H)} \leq c.
\end{equation}

\paragraph{Passage to the limit as $\varepsilon \searrow 0$.}
Collecting all the previous estimates and using suitable compactness results, we can state that there exist
\begin{equation}
\begin{split}
&\vartheta_\mu \in L^2(0,T;V)  \\
&\chi_\mu \in L^2(0,T;D(A;H)) \cap H^1(0,T; H)  \\
&\xi_\mu \in L^2(0,T;H)  \\
&\mathscr{L} \in L^{\infty}(0,T;H) \cap H^1(0,T;V')
\end{split}
\end{equation}
such that, at least for a subsequence $\varepsilon_n \searrow 0$, they are limit of the approximating solutions
\begin{align}
&\vartheta_\varepsilon \rightharpoonup \vartheta_\mu \ \ \ \text{in} \ L^2(0,T;V) \\
&\chi_\varepsilon \rightharpoonup \chi_\mu \ \ \ \text{in} \ L^2(0,T;D(A;H)) \cap H^1(0,T; H) \\
&\beta'_\varepsilon(\chi_\varepsilon) \rightharpoonup \xi_\mu \ \ \ \text{in} \ L^2(0,T;H) \\
&\text{Log}_\varepsilon(\vartheta_\varepsilon) \stackrel{*}{\rightharpoonup} \mathscr{L} \ \ \ \text{in} \ L^{\infty}(0,T;H) \cap H^1(0,T;V').
\end{align}

It is clear that the Cauchy conditions (\ref{ultimaPmu}) are fulfilled. Indeed, weak convergence in $H^1(0,T;X)$ of a sequence implies weak convergence in $X$ (any functional space) of the corresponding initial data. Thus we can directly conclude $\chi_\mu(0) = \chi_{0,\mu}$ in $H$ and a.e. in $\Omega$. 

For the logarithmic term, we have $\text{Log}_\varepsilon \vartheta_\varepsilon (0) = \text{Log}_\varepsilon \vartheta_{0,\varepsilon} \rightharpoonup \mathscr{L}(0)$ in $V'$; on the other hand, $\text{Log}_\varepsilon \vartheta_{0,\varepsilon} \rightarrow \log \vartheta_0$ for a.e. $x \in \Omega$ (hence, it converges in measure) and $\text{Log}_\varepsilon \vartheta_{0,\varepsilon} \in L^{\infty}(\Omega)$, with uniformly bounded norm. Then, $\forall \, q \in [1, +\infty)$ we have (see Reference \cite[Chap. XI, Proposition 3.10]{visintinmodel})
\begin{equation}
\text{Log}_\varepsilon \vartheta_{0,\varepsilon} \xrightarrow{\varepsilon \rightarrow 0} \log \vartheta_0 \ \ \ \text{in} \ L^q(\Omega);
\end{equation}
in particular, $\text{Log}_\varepsilon \vartheta_{0,\varepsilon} \rightarrow \log \vartheta_0$ in $L^2(\Omega)$. Then, by uniqueness of limit, $\mathscr{L}(0) = \log \vartheta_0$ in $H$ and a.e. in $\Omega$.

Now, we deal with the non linear terms. Thanks to Aubin Lemma (see \cite[Chap. 1.5, Theorem 5.1]{aubin}), $\chi_\varepsilon \rightarrow \chi_\mu$ in $L^2(0,T;V)$, as $\varepsilon \searrow 0$, then $\sigma'(\chi_\varepsilon) \rightarrow \sigma'(\chi_\mu)$ in $L^2(0,T;H)$, since $\sigma'$ is Lipschitz. As far as the non linearity associated with the maximal monotone graph $\partial \beta$ is concerned, we can apply Barbu Lemma (see \cite[Chap. II, Lemma 1.3]{barbu}) and conclude that $\chi_\mu \in D(\partial \beta)$ and $\xi_\mu \in \partial \beta(\chi_\mu)$ a.e. in $Q$. Finally, concerning the logarithmic term, thanks to the compact embedding $H \subseteq V'$ and Aubin Lemma (see \cite[Chap. 1.5, Theorem 5.1]{aubin}), we have

\begin{equation}
\text{Log}_\varepsilon \vartheta_\varepsilon \rightarrow \mathscr{L} \ \ \  \text{*-weakly in} \ L^{\infty}(0,T;H) \ \text{and strongly in} \ L^2(0,T;V');
\end{equation}
thus, $\log_\varepsilon(\vartheta_\varepsilon) \rightarrow \mathscr{L}$ weakly in $L^{2}(0,T;H)$ and strongly in $L^2(0,T;V')$, as $\varepsilon \searrow 0$, since $\varepsilon \vartheta_\varepsilon \rightarrow 0$ strongly in $L^2(0,T;V)$. Therefore, the following limit holds
\begin{equation}
\lim_{\varepsilon \rightarrow 0} \int_0^T {\langle \log_\varepsilon(\vartheta_\varepsilon), \vartheta_\varepsilon \rangle} = \int_0^T {\langle \mathscr{L}, \vartheta_\mu \rangle};
\end{equation}
where $\langle \cdot, \cdot \rangle$ stands for the duality pairing between $V'$ and $V$; we apply Barbu Lemma once more and we conclude $\vartheta_\mu \in D(\log)$ (then, $\vartheta_\mu > 0$) and $\mathscr{L} = \log \vartheta_\mu$ a.e. in $Q$.

\begin{remark}
We can notice that we could have taken two different parameters $\varepsilon$ and $\varepsilon'$ in approximating $\log$ and $\partial \beta$, in order to prove Theorem \ref{teo1}. 

Moreover, we could have kept fixed either of them, say $\varepsilon$, and let $\varepsilon'$ tend to zero. This leads to an existence result for a semi-regularized problem. All the \textit{a priori} estimates are conserved in the limit (thanks to the semicontinuity of the norms), so that it is possible to let $\varepsilon$ tend to zero. The same can be done by switching the parameters.
\end{remark}

\paragraph{Uniqueness of solution to the problem $(P_\mu)$ and continuous dependence on data.}
We conclude the proof of Theorem \ref{teo1}, by showing uniqueness and continuous dependence of solutions from data. We follow the same procedure used previously to prove uniqueness of solution to problem $(P_\varepsilon)$. 

Let consider the first equation (\ref{1eqPmu}), formally integrated in time, and the second equation (\ref{2eqPmu}) of problem $(P_\mu)$ for two solutions $(\vartheta_{\mu,i}, \chi_{\mu,i}, \xi_{\mu,i})$ corresponding to the sets of data $(w_i, \vartheta_{0,i}, \chi_{0,\mu,i})$, $i=1,2$. 

We test the difference of the first equations by $\vartheta_\mu := \vartheta_{\mu,1} - \vartheta_{\mu,2}$ and the difference of the second ones by $ \chi_\mu := \chi_{\mu,1} - \chi_{\mu,2}$; then, we integrate over $(0,t)$ and we sum the obtained equalities
\begin{gather}
\int_{Q_t}{[\log \vartheta_{\mu,1} - \log \vartheta_{\mu,2}] \vartheta_\mu} 
+ \int_{Q_t}{(1 \ast \nabla \vartheta_\mu) \nabla \vartheta_\mu} 
+ \int_{\Sigma_t}{(1 \ast \alpha \vartheta_\mu) \vartheta_\mu}
+ \mu \int_{Q_t}{(\partial_t \chi_\mu)  \chi_\mu}  \nonumber \\
+ \int_{Q_t}{\left|\nabla \chi_\mu \right|^2} 
+ \int_{Q_t}{\xi_\mu \chi_\mu}
= \int_{Q_t}{(1 \ast w + \eta_{0,\mu}) \vartheta_\mu} - \int_{Q_t}{[\sigma'(\chi_{\mu,1}) - \sigma'(\chi_{\mu,2})] \chi_\mu} . \label{unicita}
\end{gather}
where we set again a similar notation for all the differences involved: $w := w_1 - w_2$, $\xi_\mu := \xi_{\mu,1} - \xi_{\mu,2}$, $\eta_{0,\mu} := [\log \vartheta_{0,1} -  \log \vartheta_{0,2}] + [\chi_{0,\mu,1} - \chi_{0,\mu,2}]$. 

After appropriate estimates, we get the following 
\begin{gather}
\int_{Q_t}{[\log \vartheta_{\mu,1} - \log \vartheta_{\mu,2}] \vartheta_\mu} + \int_{Q_t}{\xi_\mu \chi_\mu} 
+ \left\| \nabla \chi_\mu \right\|^2_{L^2(Q_t)} + \mu \left\| \chi_\mu(t) \right\|^2_H  
+ \left\| 1 \ast \vartheta_\mu (t) \right\|^2_V \nonumber \\
\leq M \left( \left\| \eta_{0,\mu} \right\|_H +  \left\|g\right\|_{L^2(0,T;H)}  + \left\|h\right\|_{L^2(0,T;L^2(\Gamma))}\right),
\end{gather}
with $M = M( \Omega, T)$ and $g:= g_1 - g_2$, $h := h_1 - h_2$.

From this relation, continuous dependence on data follows immediately and uniqueness of solution too (set $g_1 = g_2$, $h_1=h_2$ and $\eta_{0,\mu,1} = \eta_{0,\mu,2}$).

\section{Asymptotic behavior as $\mu \searrow 0$}
\label{mu}

In this section we prove Theorem \ref{teo3} and study the asymptotic behavior of the problem $(P_\mu)$ as the time relaxation parameter $\mu \searrow 0$.

The procedure is the following. We first prove some \textit{a priori} estimates, uniform with respect to $\mu$, and then let $\mu$ tends to zero. Using suitable compactness results, we find a subsequence of solutions to problem $(P_\mu)$ which (weakly) converges to a solution to the limit problem $(P_0)$; the whole family of solutions indeed converges, thanks to the uniqueness result stated in Theorem \ref{teo4}, which is proved at the end of this section. 

We make a distinction between the two cases (Hypothesis 1 or 2), since the passage to the limit as $\mu \searrow 0$ and the proof of existence of solutions to the limit problem use different methods in either case. 

First of all, let point out that all the \textit{a priori} estimates, which we perform, are formal: we should come back to the procedure used in \cite{global} and in the previous section, where problem $(P_\mu)$, $\mu >0$, has been solved by letting $\varepsilon \searrow 0$ in the approximating problem $(P_\varepsilon)$. 
However, in order not to make the exposition too heavy, we prefer to proceed formally. 

All the constants that appear in the following estimates are positive quantities, independent of the parameter $\mu$. 

\subsection{Uniform estimates}
\subsubsection{First a priori estimate}
The first estimate gives a uniform bound for the corresponding norms of solutions $(\vartheta_\mu, \chi_\mu)$. 

Assume either Hypothesis 1 or 2. We formally multiply equation (\ref{1eqPmu}) by $\vartheta_\mu$ and equation (\ref{2eqPmu}) by $\partial _t \chi_\mu$, then we integrate over $(0,t)$, with $t \in (0,T]$, and we sum the resulting equations. After some rearrangements, we have

\begin{gather}
\int_{\Omega}{\vartheta_\mu (t)} + 
\int_{Q_t}{\left| \nabla \vartheta_\mu \right|^2} + 
\int_{\Sigma_t}{\alpha \left|\vartheta_\mu\right|^2} +
\mu \int_{Q_t}{\left| \partial_t\chi_\mu \right|^2} + 
\frac{1}{2} \int_{\Omega}{\left| \nabla \chi_\mu (t) \right|^2}  
+ \int_{Q_t}{\xi_\mu \partial_t\chi_\mu} \nonumber \\
+ \int_{Q_t}{\sigma'(\chi_\mu)\partial_t \chi_\mu} 
= \int_{\Omega}{\vartheta_0} + \int_{Q_t}{w \vartheta_\mu} 
+ \frac{1}{2} \int_{\Omega}{\left| \nabla \chi_{0,\mu} \right|^2}.
\label{eqstima1mu}
\end{gather}

Let deal with each term separately. The first term on the left hand side is non negative, since $\vartheta_\mu >0$ a.e. on $Q$; the second and third terms are estimated from below by the norm of $\vartheta_\mu$ in $L^2(0,t;V)$. 

\paragraph{}
For the last two terms of the left hand side, we distinguish the cases in which Hypothesis 1 or 2 holds. Under Hypothesis 1, using $\sigma' = a$ constant and (\ref{betaquadr}), we have
\begin{gather}
\int_{Q_t}{\xi_\mu \partial_t\chi_\mu} + \int_{Q_t}{\sigma'(\chi_\mu)\partial_t \chi_\mu} = \int_{\Omega}{\beta(\chi_\mu (t))} + a \int_{\Omega}{\chi_\mu (t)} - \int_{\Omega}{\beta(\chi_{0,\mu})} - a \int_{\Omega}{\chi_{0,\mu}} \nonumber \\
\geq c_1 \left\| \chi_\mu (t) \right\|^2_H - c_2 + a \int_{\Omega}{\chi_\mu (t)} - \int_{\Omega}{\beta(\chi_{0,\mu})} - a \int_{\Omega}{\chi_{0,\mu}};
\end{gather}
we move the last four terms on the right hand side of the inequality (\ref{eqstima1mu}) and, thanks to (\ref{succchi0mu}) and Young Inequality, we estimate them this way
\begin{equation}
-a \int_{\Omega}{\chi_\mu (t)} + \int_{\Omega}{\beta(\chi_{0,\mu})} + a \int_{\Omega}{\chi_{0,\mu}} \leq \delta \left\| \chi_\mu (t) \right\|^2_H + c_\delta \ \ \ \forall \, \delta >0.
\end{equation}

\paragraph{}
Under Hypothesis 2, using (\ref{betasigmaquadr}), we have
\begin{gather}
\int_{Q_t}{\xi_\mu \partial_t\chi_\mu} + \int_{Q_t}{\sigma'(\chi_\mu)\partial_t \chi_\mu} = \int_{\Omega}{\beta(\chi_\mu (t))} + \int_{\Omega}{\sigma(\chi_\mu (t))} - \int_{\Omega}{\beta(\chi_{0,\mu})} \nonumber \\
-  \int_{\Omega}{\sigma(\chi_{0,\mu})} 
\geq c_1 \left\| \chi_\mu (t) \right\|^2_H - c_2 - \int_{\Omega}{\beta(\chi_{0,\mu})} -  \int_{\Omega}{\sigma(\chi_{0,\mu})},
\end{gather}
where the last two terms are uniformly bounded, thanks to (\ref{stimasigma}) and (\ref{succchi0mu}). 

\paragraph{}
Regarding the right hand side of equality (\ref{eqstima1mu}), the last term is uniformly bounded, thanks to (\ref{succchi0mu}), while the $w$-term is bounded from above by the following norms
\begin{equation}
\begin{split}
&\int_{Q_t}{g \vartheta_\mu} \leq \left\| g\right\|_{L^2(Q_t)} \left\| \vartheta_\mu \right\|_{L^2(Q_t)} \leq \delta \left\| \vartheta_\mu \right\|^2_{L^2(Q_t)} + c_\delta \left\| g\right\|^2_{L^2(Q_t)}  \\
&\int_{\Sigma_t}{h \vartheta_\mu} \leq \delta \left\| \vartheta_\mu \right\|^2_{L^2(\Sigma_t)} + c_\delta \left\| h\right\|^2_{L^2(\Sigma_t)} \leq \delta  \left\| \vartheta_\mu \right\|^2_{L^2(0,t; V)} + c_\delta \left\| h\right\|^2_{L^2(\Sigma_t)},
\end{split}
\end{equation}
using H$\ddot{\text{o}}$lder and Young Inequalities and the Trace Theorem.

Then, choosing $\delta > 0$ sufficiently small and taking the supremum on $[0,T]$, we conclude
\begin{equation}
\left\| \vartheta_\mu  \right\|^2_{L^2(0,T; V)} + \mu \left\| \partial_t \chi_\mu \right\|^2_{L^2(0,T;H)}+ \left\| \chi _\mu \right\|^2_{L^{\infty}(0,T;V)} \leq c; 
\label{stima1cap7}
\end{equation}
moreover, by comparison, we have $\left\| \log \vartheta_\mu \right\|_{L^{\infty}(0,T;V')} \leq c$.

\subsubsection{Second a priori estimate}
In order to obtain a uniform estimate for the norm of the non linear term $\xi_\mu \in \partial \beta (\chi_\mu)$, let formally multiply the equation (\ref{2eqPmu}) by $\xi_\mu$ and integrate it over $(0,t)$, $t \in (0,T]$; after some rearrangements, we get
\begin{equation}
\mu \int_{\Omega}{\beta(\chi_\mu(t))}+ \int_{Q_t}{ \beta''(\chi_\mu) \left| \nabla \chi_\mu \right|^2} + \int_{Q_t}{\left| \xi_\mu \right|^2} 
=  \mu \int_{\Omega}{\beta_(\chi_{0,\mu})} + \int_{Q_t}{\vartheta_\mu \xi_\mu} - \int_{Q_t}{\sigma'(\chi_\mu) \xi_\mu}.
\end{equation}
The first term on the right hand side of this equality is uniformly bounded, thanks to (\ref{succchi0mu}), while we can easily handle the other two terms applying H$\ddot{\text{o}}$lder and Young Inequalities
\begin{equation}
\begin{split}
\int_{Q_t}{\vartheta_\mu \xi_\mu} &\leq \delta \left\| \xi_\mu \right\|^2_{L^2(0,t;H)} + c_\delta \left\| \vartheta_\mu \right\|^2_{L^2(0,T;H)}  \\
\int_{Q_t}{\sigma'(\chi_\mu) \xi_\mu} &\leq  \delta \left\| \xi_\mu \right\|^2_{L^2(0,t;H)} + c_\delta \left\| \chi_\mu \right\|^2_{L^2(0,T;H)},
\end{split}
\end{equation}
for each arbitrary $ \delta > 0$; we recall that $\sigma'$ is a Lipschitz function (see (\ref{betasigma})) and that the norms of $\vartheta_\mu$ and $\chi_\mu$ are uniformly bounded with respect to $\mu$ thanks to estimate (\ref{stima1cap7}). 

We choose $\delta >0$ small enough and we take the supremum over $t \in (0,T)$; finally, we have in particular 
\begin{equation}
\left\| \xi_\mu \right\|^2_{L^2(0,T;H)} \leq c;
\end{equation}
moreover, by comparison in the equation (\ref{2eqPmu}) and by Elliptic Regularity Theorem, we can conclude $\left\| \chi_\mu \right\|_{L^2(0,T;D(A;H))} \leq c$.

\subsubsection{Cauchy estimate}
Consider two problems $(P_\mu)$ and $(P_\nu)$, with $\mu >\nu >0$; repeating the same arguments as above to show the uniqueness of solutions to problem $(P_\mu)$ ( Section \ref{exist}), subtract the first two equations (\ref{1eqPmu}) integrated in time and the second two equations (\ref{2eqPmu}). 

Let multiply the resulting expression by $\vartheta_\mu - \vartheta_\nu$ and by $\chi_\mu - \chi_\nu$ respectively; finally, we integrate over the interval $(0,t)$ and we sum the equations
\begin{gather}
\int_{Q_t}{[\log \vartheta_\mu - \log \vartheta_\nu] (\vartheta_\mu - \vartheta_\nu)} 
+ \int_{Q_t}{(1 \ast \nabla (\vartheta_\mu - \vartheta_\nu)) \nabla (\vartheta_\mu - \vartheta_\nu)}  \nonumber \\
+ \int_{\Sigma_t}{ \left( 1 \ast \alpha (\vartheta_\mu - \vartheta_\nu) \right) (\vartheta_\mu - \vartheta_\nu)} 
+ \lambda \int_{Q_t}{  \partial_t (\chi_\mu - \chi_\nu) (\chi_\mu - \chi_\nu)}
+ \int_{Q_t}{\left|\nabla (\chi_\mu - \chi_\nu) \right|^2}  \nonumber \\
+ \int_{Q_t}{(\xi_\mu - \xi_\nu) (\chi_\mu - \chi_\nu)} 
+ \int_{Q_t}{[\sigma'(\chi_\mu) - \sigma'(\chi_\nu)] (\chi_\mu - \chi_\nu)} \nonumber \\
\leq \int_{Q_t}{(\chi_{0,\mu} - \chi_{0,\nu}) (\vartheta_\mu - \vartheta_\nu)},
\end{gather}
for some $\nu < \lambda < \mu $.

At this point, we distinguish the case where Hypothesis 1 or 2 holds. 

\paragraph{}
Under Hypothesis 1, $\sigma'=a \in \textbf{R}$ constant, which implies that the term involving $\sigma'$ is identically zero. 

After some manipulations and estimates, we get the following inequality
\begin{gather}
\int_{Q_t}{[\log \vartheta_\mu - \log \vartheta_\nu] (\vartheta_\mu - \vartheta_\nu)}
+ \int_{Q_t}{(\xi_\mu - \xi_\nu) (\chi_\mu - \chi_\nu)} \nonumber \\
+ \left\| 1 \ast (\vartheta_\mu - \vartheta_\nu) (t) \right\|^2_V + \left\| \nabla \chi_\mu - \nabla \chi_\nu \right\|^2_{L^2(Q_t)} \leq c \left\| \chi_{0,\mu} - \chi_{0,\nu} \right\|^2_H,
\label{cauchyhp1}
\end{gather}
$\forall \, t \in (0,T)$.

Thanks to this estimate and to (\ref{succchi0mu}), we can conclude that $\{ 1 \ast \vartheta_\mu \}$ and $\{ \nabla \chi_\mu \}$ are Cauchy sequence in $L^{\infty}(0,T;V)$ and $L^2(0,T;H)$ respectively.

\paragraph{}
Under Hypothesis 2, using the strong monotonicity property (\ref{betasigmahp2}) assumed for $\partial \beta + \sigma'$, we have
\begin{gather}
\int_{Q_t}{\left|\nabla (\chi_\mu - \chi_\nu) \right|^2} + \int_{Q_t}{(\xi_\mu - \xi_\nu) (\chi_\mu - \chi_\nu)} + \int_{Q_t}{[ \sigma'(\chi_\mu) - \sigma'(\chi_\nu) ] ( \chi_\mu - \chi_\nu )}  \nonumber \\
\geq \left\| \nabla(\chi_\mu - \chi_\nu) \right\|^2_{L^2(0,T;H)} + \rho \left\| \chi_\mu - \chi_\nu \right\|^2_{L^2(0,T;H)} \geq c \left\| \chi_\mu - \chi_\nu \right\|^2_{L^2(0,T;V)}.
\end{gather}

Then, after some estimates, we get
\begin{gather}
\int_{Q_t}{[\log \vartheta_\mu - \log \vartheta_\nu] (\vartheta_\mu - \vartheta_\nu)}
+ \left\| 1 \ast (\vartheta_\mu - \vartheta_\nu) \right\|^2_{L^{\infty}(0,T;V)} + \left\|  \chi_\mu -  \chi_\nu \right\|^2_{L^2(0,T;V)} \nonumber \\
\leq c \left\| \chi_{0,\mu} - \chi_{0,\nu} \right\|^2_H.
\label{cauchyhp2}
\end{gather}
and we conclude that $\{ 1 \ast \vartheta_\mu \}$ and $\{ \chi_\mu \}$ are Cauchy sequence in $L^{\infty}(0,T;V)$ and $L^2(0,T;V)$ respectively.

\subsection{Passage to the limit as $\mu \searrow 0$ under Hypothesis 1}
Using the previous estimates, we can state that there exist
\begin{equation}
\begin{split}
&\vartheta \in L^2(0,T;V)  \\
&\chi \in L^2(0,T;D(A;H))  \\
&\xi \in L^2(0,T;H)  \\
&\zeta \in L^{\infty}(0,T;V')
\end{split}
\end{equation}
such that the following convergences hold (at least for a subsequence)
\begin{eqnarray}
\vartheta_\mu &\rightharpoonup& \vartheta \ \ \ \text{in} \ L^2(0,T;V) \label{convthetamu}\\
1\ast \vartheta_\mu &\rightarrow& 1 \ast \vartheta \ \ \ \text{in} \ L^{\infty}(0,T;V) \label{conv1starthetamu}\\
\chi_\mu &\rightharpoonup& \chi \ \ \ \text{in} \ L^2(0,T;D(A;H)) \label{convchimu}\\
\mu \partial_t \chi_\mu &\rightharpoonup& 0 \ \ \ \text{in} \ L^2(0,T;H)\\
\xi_\mu &\rightharpoonup& \xi \ \ \ \text{in} \ L^2(0,T;H)\\
\log(\vartheta_\mu) &\stackrel{*}{\rightharpoonup}& \zeta \ \ \ \text{in} \ L^{\infty}(0,T;V').
\end{eqnarray}

Now, we deal with the non linear terms. We mainly refer to \cite[Sec. 3.3]{convergence}, where an asymptotic behavior as the interfacial energy coefficient tends to $0$ is studied for the same PDE system with different boundary conditions. 

In particular, in order to analyze the term with the maximal monotone graph $\partial \beta$, we perform an \textit{ad hoc} weighted estimate, choosing $t^{2\gamma}$ as weight, with $\gamma = 3/4$. 
The choose of such a weight will be clear in some later inequalities (see (\ref{disug1})), which happen to be fulfilled if $1/2 < \gamma < 1$, and we take $\gamma = 3/4$ at once.

\begin{lemma}[see Lemma 3.1, \cite{convergence}]
Let $\gamma >0$, then the following uniform estimate holds
\begin{equation}
\int_{Q}{t^{2\gamma} \frac{\left| \partial_t \vartheta_\mu \right|^2}{\vartheta_\mu}} + \sup_{t \in (0,T)}{ t^{2\gamma} \left\| \vartheta_\mu (t) \right\|^2_V } 
+ \mu \sup_{t \in (0,T)}{ t^{2\gamma}\int_{\Omega}{\left| \partial_t \chi_\mu (t) \right|^2} } + \int_{Q}{t^{2\gamma}\left| \nabla \partial_t \chi_\mu \right|^2} \leq c.
\end{equation}
\label{stimat}
\end{lemma}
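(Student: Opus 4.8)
The plan is to obtain the estimate as a weighted, higher-order energy estimate: differentiate the phase equation in time and test the two equations with compatible, time-weighted test functions. Concretely, I would test (\ref{1eqPmu}) by $t^{2\gamma}\partial_t\vartheta_\mu$ and, after differentiating it in time, test (\ref{2eqPmu}) by $t^{2\gamma}\partial_t\chi_\mu$, then integrate over $(0,t)$ and add. As the text already warns, these manipulations are formal; they are to be carried out rigorously on the approximating problems $(P_\varepsilon)$ of Section \ref{exist}, where the extra time regularity is available, and then passed to the limit, exactly as before. Testing (\ref{1eqPmu}) this way yields the target term $\int_{Q_t}t^{2\gamma}|\partial_t\vartheta_\mu|^2/\vartheta_\mu$ (since $\partial_t\log\vartheta_\mu=\partial_t\vartheta_\mu/\vartheta_\mu$), the quantity $\tfrac12\int_0^t s^{2\gamma}\tfrac{d}{ds}\|\vartheta_\mu(s)\|_V^2\,ds$, the coupling term $\int_{Q_t}t^{2\gamma}(\partial_t\chi_\mu)(\partial_t\vartheta_\mu)$, and the source term; testing the time-differentiated (\ref{2eqPmu}) yields $\tfrac{\mu}{2}\int_0^t s^{2\gamma}\tfrac{d}{ds}\|\partial_s\chi_\mu(s)\|_H^2\,ds$, the target term $\int_{Q_t}t^{2\gamma}|\nabla\partial_t\chi_\mu|^2$, the monotone term $\int_{Q_t}t^{2\gamma}(\partial_t\xi_\mu)(\partial_t\chi_\mu)$, the term $\int_{Q_t}t^{2\gamma}\partial_t(\sigma'(\chi_\mu))(\partial_t\chi_\mu)$, and, on the right-hand side, the same coupling term $\int_{Q_t}t^{2\gamma}(\partial_t\vartheta_\mu)(\partial_t\chi_\mu)$. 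The decisive point is that, on summing the two identities, the two coupling terms cancel exactly, so no bound on $\partial_t\vartheta_\mu$ or $\partial_t\chi_\mu$ in $H$ is needed beyond what the lemma asserts.

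It then remains to control the leftover terms. Integrating the two $\tfrac{d}{ds}$-terms by parts in time produces exactly $\tfrac12 t^{2\gamma}\|\vartheta_\mu(t)\|_V^2$ and $\tfrac{\mu}{2}t^{2\gamma}\|\partial_t\chi_\mu(t)\|_H^2$ with \emph{no} contribution at $s=0$ — this is precisely the role of the weight $t^{2\gamma}$, since we do not control $\|\vartheta_\mu(0)\|_V$ or $\|\partial_t\chi_\mu(0)\|_H$ uniformly in $\mu$ — at the cost of the lower-order remainders $\gamma\int_0^t s^{2\gamma-1}\|\vartheta_\mu(s)\|_V^2\,ds$ and $\mu\gamma\int_0^t s^{2\gamma-1}\|\partial_s\chi_\mu(s)\|_H^2\,ds$. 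Since $2\gamma-1\ge0$ for $\gamma=3/4$, these are bounded by $T^{2\gamma-1}$ times $\|\vartheta_\mu\|_{L^2(0,T;V)}^2$ and $\mu\|\partial_t\chi_\mu\|_{L^2(0,T;H)}^2$ respectively, hence by a constant thanks to the first a priori estimate (\ref{stima1cap7}). The term $\int_{Q_t}t^{2\gamma}(\partial_t\xi_\mu)(\partial_t\chi_\mu)$ is non-negative by monotonicity of $\partial\beta$ (on $(P_\varepsilon)$ it reads $\int_{Q_t}t^{2\gamma}\beta''_\varepsilon(\chi_\varepsilon)|\partial_t\chi_\varepsilon|^2\ge0$) and is simply discarded; the $\sigma'$-term vanishes identically under Hypothesis 1, where $\sigma'$ is constant, while under Hypothesis 2 it is kept together with the $\xi$-term, their sum being $\ge\rho\int_{Q_t}t^{2\gamma}|\partial_t\chi_\mu|^2\ge0$ by (\ref{betasigmahp2}).

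For the source I would integrate $\int_0^t s^{2\gamma}\langle w(s),\partial_s\vartheta_\mu(s)\rangle\,ds$ by parts in time, which is legitimate because (\ref{ghreg}) gives $w\in H^1(0,T;V')\hookrightarrow C^0([0,T];V')$: this produces $t^{2\gamma}\langle w(t),\vartheta_\mu(t)\rangle$, estimated by Young's inequality as $\delta\,t^{2\gamma}\|\vartheta_\mu(t)\|_V^2+c_\delta$ with the $\delta$-part absorbed into $\tfrac12 t^{2\gamma}\|\vartheta_\mu(t)\|_V^2$ on the left, together with $\int_0^t\big(2\gamma s^{2\gamma-1}\langle w,\vartheta_\mu\rangle+s^{2\gamma}\langle\partial_s w,\vartheta_\mu\rangle\big)\,ds$, bounded by Hölder's inequality and (\ref{stima1cap7}) (the factor $s^{2\gamma-1}$ being square-integrable near $0$ since $2(2\gamma-1)>-1$). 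Collecting all the bounds, choosing $\delta$ small, and taking the supremum over $t\in(0,T)$ yields the stated estimate. I expect the main obstacle to be the bookkeeping near $s=0$ — arranging that the weight exactly neutralises the uncontrolled initial-data contributions — together with the exact cancellation of the coupling terms; the remainder is a routine chain of Hölder and Young inequalities, finally to be made rigorous on $(P_\varepsilon)$ and passed to the limit as in Section \ref{exist}.
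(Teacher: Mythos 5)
Your proposal follows the paper's argument essentially step for step: the paper likewise tests (\ref{1eqPmu}) by $t^{2\gamma}\partial_t\vartheta_\mu$, time-differentiates (\ref{2eqPmu}) and tests by $t^{2\gamma}\partial_t\chi_\mu$, exploits the cancellation of the coupling terms, uses the weight to kill the $s=0$ contributions, absorbs the $s^{2\gamma-1}$ remainders via (\ref{stima1cap7}), discards the monotone $\partial\beta$-term, and integrates the source by parts in time using (\ref{ghreg}). The only (harmless) difference is your extra remark on Hypothesis 2; the paper states and uses this lemma only in the Hypothesis 1 setting, where $\sigma''=0$.
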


\begin{proof}
We formally test the equation (\ref{1eqPmu}) by $t^{2\gamma} \partial_t \vartheta_\mu$; next, we formally differentiate (\ref{2eqPmu}) with respect to time (we obtain a second order equation) and we test it by $t^{2\gamma} \partial_t \chi_\mu$. 

We notice that, since $\sigma' = a$, then $\sigma'' =0$. Finally, we add the equalities to each other and we integrate over $(0,t)$:
\begin{gather}
\int_{Q_t}{\partial_t \log \vartheta_\mu \, t^{2\gamma} \, \partial_t \vartheta_\mu}
+ \int_{Q_t}{ \nabla \vartheta_\mu  t^{2\gamma} \nabla (\partial_t \vartheta_\mu)}
+ \int_{\Sigma_t}{\alpha \vartheta_\mu t^{2\gamma} \, \partial_t \vartheta_\mu} 
+ \mu \int_{Q_t}{ t^{2\gamma} \partial_t \chi_\mu \, \partial^2_t \chi_\mu} \nonumber \\
+ \int_{Q_t}{t^{2\gamma} \left| \nabla (\partial_t \chi_\mu) \right|^2} +
\int_{Q_t}{t^{2\gamma} \xi'_\mu \left|\partial_t \chi_\mu \right|^2}  
%\nonumber \\ 
= \int_{Q_t}{g t^{2\gamma} \partial_t \vartheta_\mu} +
\int_{\Sigma_t}{h t^{2\gamma} \partial_t \vartheta_\mu}.
\end{gather}

We deal with each term separately.

The first and the last two terms are non negative, since (recalling that $\vartheta_\mu >0$)
\begin{equation}
\int_{Q_t}{(\partial_t \log \vartheta_\mu) t^{2\gamma} (\partial_t \vartheta_\mu)} = \int_{Q_t}{t^{2\gamma} \frac{\left| \partial_t \vartheta_\mu \right|^2}{\vartheta_\mu}} \geq 0,
\end{equation}
and $\partial \beta$ is monotone;next, we estimate the second one as follows: thanks to (\ref{stima1cap7}),
\begin{gather}
\int_{Q_t}{ \nabla \vartheta_\mu  t^{2\gamma} \nabla (\partial_t \vartheta_\mu)} = \frac{t^{2\gamma}}{2} \int_\Omega{\left| \nabla \vartheta_\mu (t) \right|^2} - \int_{Q_t}{ 2 \gamma t^{2\gamma -1} \left| \nabla \vartheta_\mu \right|^2} \nonumber \\
\geq \frac{t^{2\gamma}}{2} \left\| \nabla \vartheta_\mu (t) \right\|^2_{H} - 2 \gamma T^{2 \gamma -1} \left\| \nabla \vartheta_\mu \right\|^2_{L^2(0,T;H)} \geq \frac{t^{2\gamma}}{2} \left\| \nabla \vartheta_\mu (t) \right\|^2_{H} - c. 
\end{gather}

Similarly, using (\ref{stima1cap7}), we have 
\begin{gather}
\int_{\Sigma_t}{ t^{2\gamma} \alpha \vartheta_\mu (\partial_t \vartheta_\mu)} 
= \frac{t^{2\gamma}}{2} \int_\Gamma{ \alpha \left| \vartheta_\mu (t) \right|^2} - \int_{\Sigma_t}{ 2 \gamma t^{2\gamma -1} \alpha \left| \vartheta_\mu \right|^2} \nonumber \\
\geq \overline{\alpha} \frac{t^{2\gamma}}{2} \left\| \vartheta_\mu (t) \right\|^2_{L^2(\Gamma)} - 2 \overline{\alpha} \gamma T^{2 \gamma -1} \left\| \vartheta_\mu \right\|^2_{L^2(0,T;L^2(\Gamma))}\geq \overline{\alpha} \frac{t^{2\gamma}}{2} \left\| \vartheta_\mu (t) \right\|^2_{L^2(\Gamma)} - c;
\end{gather}
and
\begin{gather}
\mu \int_{Q_t}{ t^{2\gamma} (\partial_t \chi_\mu) (\partial^2_t \chi_\mu)} = 
\frac{\mu t^{2\gamma}}{2} \int_\Omega{\left| \partial_t \chi_\mu (t) \right|^2} - \mu \int_{Q_t}{ 2 \gamma t^{2\gamma -1} \left| \partial_t \chi_\mu \right|^2} \nonumber \\
\geq \frac{\mu t^{2\gamma}}{2} \left\| \partial_t \chi_\mu (t) \right\|^2_{H} - 2\mu \gamma T^{2 \gamma -1} \left\| \partial_t \chi_\mu \right\|^2_{L^2(0,T;H)} 
\geq \frac{\mu t^{2\gamma}}{2} \left\| \partial_t \chi_\mu (t) \right\|^2_{H} - c.
\end{gather}

The source terms can be easily handled
\begin{gather}
\int_{Q_t}{g t^{2\gamma} \partial_t \vartheta_\mu} 
= t^{2\gamma} \int_{\Omega}{g (t) \vartheta_\mu (t)} - \int_{Q_t}{2 \gamma t^{2\gamma -1} \vartheta_\mu g} - \int_{Q_t}{t^{2\gamma} \vartheta_\mu \partial_t g} \nonumber \\
\leq \delta t^{2\gamma} \left\| \vartheta_\mu (t) \right\|^2_H + c_\delta \left\|g(t) \right\|^2_H + cT^{2\gamma -1}\left[ \left\|\vartheta_\mu \right\|^2_{L^2(0,T;H)} + \left\| g \right\|^2_{H^1(0,T;H)} \right] \nonumber \\
\leq \delta t^{2\gamma} \left\| \vartheta_\mu (t) \right\|^2_V + c, 
\end{gather}
for each $\delta >0$; in the last inequality we used $g \in H^1(0,T;H) \subseteq C^0([0,T];H)$ and (\ref{stima1cap7}). 
The same calculations hold for the $h$ term, thanks to the fact that $h \in H^1(0,T;L^2(\Gamma)) \subseteq C^0([0,T];L^2(\Gamma))$, the Trace Theorem and (\ref{stima1cap7})
\begin{equation}
\int_{\Sigma_t}{h t^{2\gamma} \partial_t \vartheta_\mu} \leq \delta t^{2\gamma} \left\| \vartheta_\mu (t) \right\|^2_{L^2(\Gamma)} + c, 
\end{equation}
for each $\delta >0$.

Choosing $\delta >0$ small enough, we have
\begin{gather}
\int_{Q_t}{t^{2\gamma} \frac{\left| \partial_t \vartheta_\mu \right|^2}{\vartheta_\mu}} + t^{2\gamma}\left( \left\| \nabla \vartheta_\mu (t) \right\|^2_{H} + \left\| \vartheta_\mu (t) \right\|^2_{L^2(\Gamma)}+ \mu \left\| \partial \chi_\mu (t) \right\|^2_{H}  \right) \nonumber \\
+ \int_{Q_t}{t^{2\gamma} \left| \nabla (\partial_t \chi_\mu) \right|^2} \leq c,
\end{gather}
and we get the desired result by taking the supremum over $t \in (0,T)$.
\end{proof}

At this point, we use the identity
\begin{equation}
\partial_t ( t^\gamma \vartheta_\mu)= \gamma t^{\gamma -1}\vartheta_\mu + 2 t^\gamma \left( \partial_t \sqrt{\vartheta_\mu} \right) \cdot t^{\frac{\gamma}{2}} \sqrt{\vartheta_\mu} \cdot t^{-\frac{\gamma}{2}}
\end{equation} 
and observe that, thanks to Sobolev Inequalities and (\ref{stima1cap7}),
\begin{equation}
\left\| t^{\frac{\gamma}{2}} \sqrt{\vartheta_\mu} \right\|_{L^{\infty}(0,T;L^{12}(\Omega))} = \left\| t^{\gamma} \vartheta_\mu \right\|^{\frac{1}{2}}_{L^{\infty}(0,T;L^6(\Omega))} \leq c T^{\frac{\gamma}{2}} \left\| \vartheta_\mu \right\|^{\frac{1}{2}}_{L^2(0,T;V)} \leq c. 
\end{equation}

Thanks to the previous Lemma, we can say $\left\| t^\gamma \left( \partial_t \sqrt{\vartheta_\mu} \right)\right\|_{L^2(0,T;H)} \leq c$. Now, we look for exponents $p,q, r,s>1$ such that
\begin{gather}
\left\| t^\gamma \left( \partial_t \sqrt{\vartheta_\mu} \right) \cdot t^{\frac{\gamma}{2}} \sqrt{\vartheta_\mu} \cdot t^{-\frac{\gamma}{2}} \right\|_{L^q(0,T; L^{\frac{12}{7}}(\Omega))} \nonumber \\
\leq \left\| t^\gamma \left( \partial_t \sqrt{\vartheta_\mu} \right)\right\|_{L^2(0,T;H)} \left\| t^{\frac{\gamma}{2}} \sqrt{\vartheta_\mu} \right\|_{L^{\infty}(0,T;L^{12}(\Omega))} \left\| t^{-\frac{\gamma}{2}}  \right\|_{L^s(0,T)} \leq c, 
\end{gather}
and
\begin{gather}
\left\| t^{\gamma -1}\vartheta_\mu \right\|_{L^p(0,T;L^6(\Omega))} \leq \left\| t^{\gamma -1} \right\|_{L^r(0,T)} \left\| \vartheta_\mu \right\|_{L^2(0,T; L^6(\Omega))} \nonumber \\
\leq \left\| t^{\gamma -1} \right\|_{L^r(0,T)} \left\| \vartheta_\mu \right\|_{L^2(0,T;V)} \leq c,
\end{gather}
where we used H$\ddot{\text{o}}$lder and Sobolev Inequalities. Thus, we need the following constraints to be fulfilled:
\begin{equation}
\frac{1}{q} = \frac{1}{2} + \frac{1}{s}, \ \  \frac{1}{p} = \frac{1}{2} + \frac{1}{r},  \  \ t^{- \frac{\gamma}{2}} \in L^s(0,T), \ \ t^{\gamma -1} \in L^r(0,T).
\label{disug1}
\end{equation}
As $\gamma = 3/4$, we can take $p=6/5 , \, r=3, \, q=14/13, \, s=7/3$.
Then we have
\begin{gather}
\left\| \partial_t \left( t^{\gamma} \vartheta_\mu \right) \right\|_{L^{14 / 13}(0,T; L^{12/7}(\Omega))} \leq c.
\end{gather}

At this point, we can apply a standard compactness lemma (see Reference \cite[Sec. 8, Corollary 4]{simon}) and obtain $ t^{\frac{3}{4}} \left( \vartheta_\mu - \vartheta \right) \rightarrow 0$ strongly in $C^0([0,T];H)$; then, 
\begin{equation}
\vartheta_\mu \chi_\mu \rightarrow \vartheta \chi \ \ \ \text{weakly in} \ L^1(Q_t), \label{convthetachimu}
\end{equation}
since, $\vartheta_\mu \chi_\mu = t^{3/4} \vartheta_\mu t^{-3/4} \chi_\mu$, $\chi_\mu \rightharpoonup \chi$ in $L^{\infty}(0,T;V)$ and $t^{-3/4} \in L^1(0,T)$. 

Finally, collecting all the previous results, we have
\begin{equation}
\limsup_{\mu \rightarrow 0}{\int_{Q}{\xi_\mu \chi_\mu}} \leq \int_{Q}{ \xi \chi}.
\end{equation}
Indeed, by comparison in the second equation (\ref{2eqPmu}) of problem $(P_\mu)$, we have
\begin{align}
\int_{Q}{\xi_\mu \chi_\mu} &= - \mu \int_{Q}{\left(\partial_t \chi_\mu\right) \chi_\mu} - \int_{Q}{\left| \nabla \chi_\mu \right|^2} - \int_{Q}{ a \chi_\mu} + \int_{Q}{\vartheta_\mu \chi_\mu} \nonumber \\
&= - \frac{\mu}{2} \left\| \chi_\mu (T) \right\|^2_H + \frac{\mu}{2} \left\| \chi_\mu (0) \right\|^2_H - \left\| \nabla \chi_\mu \right\|^2_{L^2(Q)} - \int_Q{a\chi_\mu} + \int_{Q}{\vartheta_\mu \chi_\mu} \nonumber \\
&\leq \frac{\mu}{2} \left\| \chi_{0,\mu} \right\|^2_H - \left\| \nabla \chi_\mu \right\|^2_{L^2(Q)} - \int_Q{a \chi_\mu} + \int_{Q}{\vartheta_\mu \chi_\mu} \nonumber \\
&\leq \mu c - \left\| \nabla \chi_\mu \right\|^2_{L^2(Q)} - \int_Q{a \chi_\mu} + \int_{Q}{\vartheta_\mu \chi_\mu}.
\end{align}

On the other hand, thanks to (\ref{convchimu}), (\ref{convthetachimu})  and the linearity of $\sigma$, 
\begin{equation}
\int_{Q}{\xi \chi} = - \int_{Q}{\left| \nabla \chi \right|^2} - \int_Q{a\chi} + \int_{Q}{\vartheta \chi},
\end{equation}
hence
\begin{equation}
\limsup_{\mu \rightarrow 0}{\int_{Q}{\xi_\mu \chi_\mu}} \leq - \int_{Q}{\left| \nabla \chi \right|^2} - \int_Q{a\chi} + \int_{Q}{\vartheta \chi} = \int_{Q}{ \xi \chi}.
\end{equation}

In conclusion, owing to \cite[Sec. II, Lemma 1.3]{barbu}, we get $\chi \in D(\partial \beta)$ and $\xi \in \partial \beta(\chi) $ a.e. in $Q$.

Concerning the logarithmic term, we can follow the same procedure we have seen for the $\xi$ term. By comparison in the first equation integrated in time and due to the previous convergences (see (\ref{convthetamu}), (\ref{conv1starthetamu}), (\ref{convthetachimu})), we get
\begin{equation}
\limsup_{\mu \rightarrow 0}{\int_0^T{\langle \zeta_\mu, \vartheta_\mu \rangle}} \leq \int_0^T{\langle \zeta, \vartheta \rangle}, \label{limsuplog}
\end{equation}
where $\zeta_\mu = \log \vartheta_\mu$.
Hence, owing again to \cite[Sec. II, Lemma 1.3]{barbu}, and knowing that $\zeta_\mu \in \text{Log} \, \vartheta_\mu = \partial \Psi (\vartheta_\mu)$ (see \cite[Remark 4.3]{convergence}), we have $\vartheta \in D(\text{Log})$ and $\zeta \in \partial \Psi (\vartheta) = \text{Log} \, \vartheta$.

It is also possible to prove that the solution $\vartheta$ to problem $(P_0)$, which is the absolute temperature, is strictly positive (see \cite[Theorem 4.7]{convergence}). 

\subsection{Passage to the limit as $\mu \searrow 0$ under Hypothesis 2}

Taking into account the uniform estimates we have performed, we can conclude that there exist
\begin{equation}
\begin{split}
&\vartheta \in L^2(0,T;V)  \\
&\chi \in L^2(0,T;D(A;H))  \\
&\xi \in L^2(0,T;H)  \\
&\zeta \in L^{\infty}(0,T;V')
\end{split}
\end{equation}
such that the following convergences (as $\mu \searrow 0$) hold, at least for a subsequence,  
\begin{eqnarray}
\vartheta_\mu &\rightharpoonup& \vartheta \ \ \ \text{in} \ L^2(0,T;V)\\
1\ast \vartheta_\mu &\rightarrow& 1 \ast \vartheta \ \ \ \text{in} \ L^{\infty}(0,T;V) \\
\chi_\mu &\rightarrow& \chi \ \ \ \text{weakly in} \ L^2(0,T;D(A;H)) \nonumber \\ 
&&\ \ \ \ \ \text{strongly in} \ L^2(0,T;V) \\
\mu \partial_t \chi_\mu &\rightharpoonup& 0 \ \ \ \text{in} \ L^2(0,T;H)\\
\xi_\mu &\rightharpoonup& \xi \ \ \ \text{in} \ L^2(0,T;H)\\
\log(\vartheta_\mu) &\stackrel{*}{\rightharpoonup}& \zeta \ \ \ \text{in} \ L^{\infty}(0,T;V').
\end{eqnarray}

Thanks to these convergences, we can identify the $\sigma'$ term and we can immediately apply the result stated in \cite[Sec. II, Lemma 1.3]{barbu}, in order to get $ \chi \in D(\partial \beta)$ and $\vartheta \in D(\text{Log})$, $ \xi \in \partial \beta(\chi)$ and $\zeta \in \partial \Psi (\vartheta) = \text{Log} \, \vartheta$ a.e. in $Q$.

\subsection{Uniqueness of solution to problem $(P_0)$ and continuous dependence on data}

To show the uniqueness of solution and continuous dependence on data, we follow the same method used in Section \ref{exist}, to show uniqueness of solution to the problem $(P_\mu)$ with $\mu >0$. 

In particular, we distinguish the case where Hypothesis 1 or 2 holds, since the behavior of the non linear terms $\beta$ and $\sigma$ is quite different. 

\paragraph{}
Assuming Hypothesis 1, we write problem $(P_0)$ for two distinct solution $(\vartheta_i, \chi_i, \xi_i, \zeta_i)$, $i=1,2$, and we subtract one equation to the other.
\begin{align}
&[\zeta_1 - \zeta_2] + [\chi_1 - \chi_2] + 1 \ast B(\vartheta_1 - \vartheta_2) = 1 \ast (w_1 - w_2) + \eta_{0,1} - \eta_{0,2} \label{eq1diff0}\\
&A (\chi_1 - \chi_2) + [\xi_1 - \xi_2] + [\sigma'(\chi_1) - \sigma'(\chi_2)] = \vartheta_1 - \vartheta_2. \label{eq2diff0}
\end{align}
We test (\ref{eq1diff0}) by $\vartheta:= \vartheta_1 - \vartheta_2$ and (\ref{eq2diff0}) by $\chi:= \chi_1 - \chi_2$ and we integrate over $(0,t)$, $t \in (0,T]$; finally we add the resulting equation:
\begin{equation}
\int_{Q_t}{\zeta \vartheta} 
+ \int_{Q_t}{(1 \ast \nabla \vartheta) \nabla \vartheta}  
+ \int_{\Sigma_t}{ \left( 1 \ast \alpha \vartheta \right) \vartheta} 
+ \int_{Q_t}{\left|\nabla \chi \right|^2}  
+ \int_{Q_t}{\xi \chi}
= \int_{Q_t}{(1 \ast w + \eta_0) \vartheta},
\end{equation}
where $\zeta := \zeta_1 - \zeta_2$, $\xi:= \xi_1 - \xi_2$, $w:= w_1 - w_2$, $\eta_0:= \eta_{0,1} - \eta_{0,2}$ ; the $\sigma'$ term is identically zero, since $\sigma' = a$ constant.

After suitable estimates, similar to those performed in Section \ref{exist}, we get the following inequality $\forall \, t \in (0,T)$
\begin{gather}
\int_{Q_t}{\zeta \vartheta} + \int_{Q_t}{\xi \chi} 
+ \left\| \nabla \chi \right\|^2_{L^2(Q_t)}   
+ \left\| 1 \ast \vartheta (t) \right\|^2_V \nonumber \\
\leq \tilde{M}\left( \left\| \eta_0 \right\|^2_H +  \left\|g\right\|^2_{L^2(0,T;H)} + \left\|h\right\|^2_{L^2(0,T;L^2(\Gamma))} \right), 
\end{gather}
where $\tilde{M} = \tilde{M}(\Omega, T)$ and $g:= g_1 - g_2$, $h:= h_1 - h_2$.

\paragraph{}
Assuming Hypothesis 2 and using (\ref{betasigmahp2}), the terms related to $\partial \beta$ e $\sigma$ are handled as follows
\begin{equation}
\int_{Q_t}{\left|\nabla \chi \right|^2} + \int_{Q_t}{\xi \chi} + \int_{Q_t}{ [\sigma'(\chi_1) - \sigma'(\chi_2) ] \chi} \geq c \left\| \chi \right\|^2_{L^2(0,T;V)}.
\end{equation}

For the other terms we apply the same procedure seen in Section \ref{exist} and we get
\begin{gather}
\int_{Q_t}{\zeta \vartheta}
+ \left\| 1 \ast \vartheta \right\|^2_{L^{\infty}(0,T;V)} + \left\|  \chi \right\|^2_{L^2(0,T;V)}  \nonumber \\
\leq \overline{M} \left( \left\| \eta_0 \right\|^2_H + \left\|g\right\|^2_{L^2(0,T;H)} + \left\|h\right\|^2_{L^2(0,T;L^2(\Gamma))} \right),
\end{gather}
where $\overline{M} = \overline{M}(\Omega, T)$.

Finally, we can notice that, since $\Psi$ is strictly convex, its subdifferential $\partial \Psi$ is strictly monotone, then we have $\vartheta_1 = \vartheta_2$ and $\chi_1=\chi_2$, if we set $g_1 = g_2$, $h_1 = h_2$ and $\eta_{0,1} = \eta_{0,2}$.

\section{Acknowledgments}
The author gratefully acknowledges Prof. Elisabetta Rocca for proposing this problem and for her remarkable help in solving it.

\end{document}